\newtheorem{theorem}{Theorem}[section]
\newtheorem{proposition}[theorem]{Proposition}
\newtheorem{corollary}[theorem]{Corollary}
\newtheorem{lemma}[theorem]{Lemma}
\newtheorem{remark}[theorem]{Remark}
\newtheorem{example}[theorem]{Example}
\def\ml{\mathcal{C}}
\def\ml1{\mathcal{C}^1}
\def\mlb1{\mathcal{C}_{b}^{1}}
\def\mr{\mathbb{R}}
\def\mc{\mathbb{C}}
\def\mz{\mathbb{Z}}
\def\frk{\frak}
\def\Phi{{\frk n}}
  \definecolor{colore}{cmyk}{0,1,0.6,0}
  \definecolor{coloregen}{cmyk}{0.7,0,1,0}
  \definecolor{coloresimo}{cmyk}{1,0.6,0,0}
  \definecolor{colore}{cmyk}{0,0,0,1}
  \definecolor{coloregen}{cmyk}{0,0,0,1}
  \definecolor{coloresimo}{cmyk}{0,0,0,1}
\title{Braid groups in complex spaces}
\author{Sandro {\sc Manfredini}\footnote{Department of Mathematics, University of Pisa, manfredi@dm.unipi.it}  \and Saima {\sc Parveen}\footnote{$^{2}$Abdus Salam School of Mathematical Sciences, GC University, Lahore-Pakistan, saimashaa@gmail.com} \and Simona {\sc Settepanella}\footnote{LEM, Scuola Superiore Sant'Anna, Pisa, s.settepanella@sssup.it}}
\begin{document}

\maketitle

\begin{abstract}
We describe the fundamental groups of ordered and unordered $k-$point sets
in $\mc^n$ generating an affine subspace of fixed dimension. 
\end{abstract}

\begin{center}
{\small\noindent{\bf Keywords}:\\
complex space, configuration spaces, \\braid groups.}
\end{center}

\begin{center}
{\small\noindent{\bf MSC (2010)}:
20F36, 52C35, 57M05, 51A20.}
\end{center}

\section{Introduction}

Let $M$ be a manifold and $\Sigma_k$ be the symmetric group on $k$ elements. The \emph{ordered} and \emph{unordered configuration spaces} of $k$ distinct points in $M$, $\mathcal{F}_k(M)=\{(x_1,\ldots,x_k)\in
M^k|x_i\neq x_j,\,\,i\neq j\}$ and $\mathcal{C}_k(M)=\mathcal{F}_{k}(M)/\Sigma _k$, have been widely studied. It is well known that for a simply connected manifold $M$ of dimension $\geq 3$, the \emph{pure braid group}
$\pi_1(\mathcal{F}_k(M))$ is trivial and the \emph{braid group}
$\pi_1(\mathcal{C}_k(M))$ is isomorphic to $\Sigma_k$, while in low dimensions there
are non trivial pure braids. For example, (see \cite{F}) the pure braid group of the plane
$\mathcal{PB}_n$ has the following presentation 
$$
\mathcal{PB}_n=\pi_1(\mathcal{F}_n(\mathbb{C}))\cong\big<\alpha_{ij},\ 
1\leq i<j\leq n\ \big |\, (YB\,3)_n,(YB\,4)_n\big>,
$$
where $(YB\,3)_n$ and $(YB\,4)_n$ are the Yang-Baxter relations: 
$$
\begin{aligned}
&(YB\,3)_n\!:\,&
\alpha_{ij}\alpha_{ik}\alpha_{jk}=\alpha_{ik}\alpha_{jk}\alpha_{ij}=
\alpha_{jk}\alpha_{ij}\alpha_{ik}, \, 1\leq i <j< k\leq n,\ \ \ \ \ \ \ \ \\
&(YB\,4)_n\!:\,&
[\alpha_{kl},\alpha_{ij}]=[\alpha_{il},\alpha_{jk}]=[\alpha_{jl},
\alpha_{jk}^{-1}\alpha_{ik}\alpha_{jk}]=[\alpha_{jl},\alpha_{kl}\alpha_{ik}
\alpha_{kl}^{-1}]=1,\\
& &  1\leq i<j<k<l\leq n ,
\end{aligned}
$$
while the braid group of the plane $\mathcal{B}_n$ has the well known presentation (see \cite{A})
$$
\mathcal{B}_n=\pi_1(\mathcal{C}_n(\mathbb{C}))\cong\big<\sigma_i,\ 1\leq i\leq n-1\ |\ (A)_n\big>,
$$
where $(A)_n$ are the classical Artin relations:
$$
\begin{aligned}
&(A)_n: \sigma_i\sigma_j=\sigma_j\sigma_i,\, 1\leq i<j\leq n-1,\, j-i\geq 2,\\ &
\ \ \ \ \ \ \ \ \: \sigma_i\sigma_{i+1}\sigma_i=\sigma_{i+1}\sigma_i\sigma_{i+1},\,
1\leq i< n-1.
\end{aligned}
$$ 
Other interesting examples are the pure braid group and the braid group of the sphere $S^2\approx\mc P^1$ with presentations (see \cite{B2} and \cite{F})
$$
\pi_1(\mathcal{F}_{n}(\mc P^1))\cong\big<\alpha_{ij}, 1\leq i<j\leq n-1\,\big|%
(YB\,3)_{n-1},(YB\,4)_{n-1}, D_{n-1}^2=1\big>
$$
$$
\pi_1(\mathcal{C}_{n}(\mc P^1))\cong\big<\sigma_i,1\leq i\leq n-1\,\big|%
(A)_{n},\,\,\sigma_1\sigma_2\ldots\sigma_{n-1}^2\ldots\sigma_2\sigma_1=1\big>,
$$
where $D_k=\alpha_{12}(\alpha_{13}\alpha_{23})(\alpha_{14}\alpha_{24}\alpha_{34}) \cdots(\alpha_{1k}\alpha_{2k}\cdots\alpha_{k-1\ k})$.\\
 The inclusion morphisms $\mathcal{PB}_{n}\rightarrow \mathcal{B}_{n}$ are given by (see \cite{B2})
$$
\alpha _{ij}\mapsto
\sigma _{j-1}\sigma _{j-2}\ldots \sigma _{i+1}\sigma_{i}^{2}\sigma_{i+1}^{-1}\ldots \sigma_{j-1}^{-1}\,\,
$$
and due to these inclusions, we can identify the pure braid $D_n$ with
$\Delta _{n}^{2}$, the square of the fundamental Garside braid (\cite{G}).
In a recent paper (\cite{BS}) Berceanu and the second author introduced new configuration spaces.  They stratify the classical configuration spaces $\mathcal{F}_k(\mathbb{C} P^n)$ (resp. $\mathcal{C}_k(\mathbb{C} P^n)$) with complex submanifolds $\mathcal{F}_{k}^{i}(\mathbb{C}P^n)$ (resp. $ \mathcal{C}_{k}^{i}(\mathbb{C}P^n)$)
defined as the ordered (resp. unordered) configuration spaces of all $k$
points in $\mathbb{C} P^n$ generating a projective subspace of dimension $i$. Then they compute the fundamental groups $\pi_1(\mathcal{F}_{k}^{i}(\mathbb{C}P^n))$ and $\pi_1(\mathcal{C}_{k}^{i}(\mathbb{C}P^n))$, proving that the former are trivial and the latter are isomorphic to $\Sigma_k$ except when $i=1$ providing, in this last case, a presentation for both $\pi_1(\mathcal{F}_{k}^{1}(\mathbb{C}P^n))$ and $\pi_1(\mathcal{C}_{k}^{1}(\mathbb{C}P^n))$ similar to those of the braid groups of the sphere.
In this paper we apply the same technique to the affine case, i.e. to $\mathcal{F}_k(\mathbb{C}^n)$ and 
$\mathcal{C}_k(\mathbb{C}^n)$, showing that the situation is similar except in one case. More precisely we prove that, if $\mathcal{F}_{k}^{i,n}=\mathcal{F}_{k}^{i}(\mathbb{C}^n)$ and $\mathcal{C}_{k}^{i,n}=\mathcal{C}_{k}^{i}(\mathbb{C}^n)$ denote, respectively, the ordered and unordered configuration spaces of all $k$ points in $\mathbb{C}^n$ generating an affine subspace of dimension $i$, then the following theorem holds:
\begin{theorem}  
The spaces $\mathcal{F}_{k}^{i,n}$ are simply connected except for $i=1$ or $i=n=k-1$. In these cases
\begin{enumerate}
\item $\pi_1(\mathcal{F}_{k}^{1,1})=\mathcal{PB}_k$,
\item $\pi_1(\mathcal{F}_{k}^{1,n})=\mathcal{PB}_k/<\,D_{k}\,>$ when $n >1$,
\item $\pi_1(\mathcal{F}_{n+1}^{n,n})=\mathbb Z$ for all $n \geq 1$. 
\end{enumerate}
The fundamental group of $\mathcal{C}_{k}^{i,n}$ is isomorphic to the symmetric group $\Sigma_k$  except for $i=1$ or $i=n=k-1$. In these cases:
\begin{enumerate}
\item $\pi_1(\mathcal{C}_{k}^{1,1})=\mathcal{B}_k$,
\item $\pi_1(\mathcal{C}_k^{1,n})=\mathcal{B}_k/<\, \Delta^2_k\, >$ when $n>1$,
\item $\pi_1(\mathcal{C}_{n+1}^{n,n})=\mathcal{B}_{n+1}/<{\sigma_1}^2={\sigma_2}^2=\cdots={\sigma_n}^2>$ for all $n \geq 1$.
\end{enumerate}
\end{theorem}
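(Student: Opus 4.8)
The plan is to realise $\mathcal{F}^{i,n}_{k}$ and $\mathcal{C}^{i,n}_{k}$ as total spaces of locally trivial bundles over the affine Grassmannian $\mathrm{AGr}(i,n)$ of affine $i$-planes in $\mc^{n}$, the projection being ``take the affine span''. The map $\mathrm{AGr}(i,n)\to\mathrm{Gr}(i,n)$ sending an affine plane to its direction is an affine bundle, so $\mathrm{AGr}(i,n)\simeq\mathrm{Gr}(i,n)$; in particular it is simply connected, and $\pi_{2}(\mathrm{AGr}(i,n))\cong\mz$ when $0<i<n$ while $\pi_{2}=0$ when $i=n$. The fibre of the span map is $\mathcal{F}^{i,i}_{k}$ (resp. $\mathcal{C}^{i,i}_{k}$), so everything reduces to computing $\pi_{1}$ of these fibres and, for $i=1$, the connecting map $\partial\colon\pi_{2}(\mathrm{AGr}(1,n))\to\pi_{1}(\text{fibre})$. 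Two elementary facts will be used throughout: (i) inside $\mathcal{F}_{k}(\mc^{m})$ the locus where the affine span has dimension $j$ has complex codimension $(m-j)(k-j-1)$, so the complement of the open dense stratum $\mathcal{F}^{\min(m,k-1),m}_{k}$ is a closed stratified set of complex codimension $\ge 2$ unless $k=m+1$; and (ii) for $k=m+1$ the map $\mathrm{Aff}(m,\mc)\to\mathcal{F}^{m,m}_{m+1}$ sending an affine transformation to the image of a fixed affine frame is a homeomorphism, so $\pi_{1}(\mathcal{F}^{m,m}_{m+1})\cong\pi_{1}(GL(m,\mc))\cong\mz$.

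Assume $i\ge 2$. If $i=\min(n,k-1)$, then $\mathcal{F}^{i,n}_{k}$ is open dense in $\mathcal{F}_{k}(\mc^{n})$: when $k\ne n+1$, by (i) its complement has real codimension $\ge 4$, so by transversality $\pi_{1}(\mathcal{F}^{i,n}_{k})\cong\pi_{1}(\mathcal{F}_{k}(\mc^{n}))=1$ (here $n\ge i\ge 2$, and $\mathcal{F}_{k}(\mc^{n})$ is simply connected by the fact recalled in the Introduction), while if $k=n+1$ we are in the case $i=n=k-1$ and (ii) gives $\pi_{1}=\mz$. If instead $i<\min(n,k-1)$, then $0<i<n$ and $k\ge i+2$; in the span bundle the fibre $\mathcal{F}^{i,i}_{k}$ is now the open dense stratum of $\mathcal{F}_{k}(\mc^{i})$ with $k\ge i+2$, hence simply connected by the previous case, and the homotopy sequence $1\to\pi_{1}(\mathcal{F}^{i,n}_{k})\to\pi_{1}(\mathrm{AGr}(i,n))=1$ forces $\pi_{1}(\mathcal{F}^{i,n}_{k})=1$. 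Thus for $i\ge 2$ the space $\mathcal{F}^{i,n}_{k}$ is simply connected unless $i=n=k-1$, where $\pi_{1}=\mz$.

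Now $i=1$. When $n=1$, $\mathcal{F}^{1,1}_{k}=\mathcal{F}_{k}(\mc)$ has $\pi_{1}=\mathcal{PB}_{k}$. When $n\ge 2$ the span bundle is $\mathcal{F}_{k}(\mc)\to\mathcal{F}^{1,n}_{k}\to\mathrm{AGr}(1,n)\simeq\mc P^{\,n-1}$, and the homotopy sequence gives $\pi_{1}(\mathcal{F}^{1,n}_{k})=\mathcal{PB}_{k}/\operatorname{im}\partial$ with $\partial\colon\pi_{2}(\mc P^{\,n-1})\cong\mz\to\mathcal{PB}_{k}$. To identify $\operatorname{im}\partial$, restrict the bundle to a linear $\mc P^{1}\subset\mathrm{AGr}(1,n)$, realised as the pencil of affine lines through a fixed point $q$ with directions in a fixed $\mc^{2}$; in the two standard affine charts of the pencil the two line coordinates of a configuration differ by the scaling factor $m$ (the slope parameter), so the clutching map over the equatorial circle is ``rescale all points by $m\in S^{1}$''. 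Hence $\partial$ sends a generator to the loop that rigidly rotates a based configuration about $q$ through the angle $2\pi$, and — choosing the basepoint so that no configuration point lies at $q$ — this loop is exactly the full twist $D_{k}=\Delta^{2}_{k}$, which is central; thus $\operatorname{im}\partial=\langle D_{k}\rangle$ and $\pi_{1}(\mathcal{F}^{1,n}_{k})=\mathcal{PB}_{k}/\langle D_{k}\rangle$. The same pencil computation for unordered configurations (fibre $\mathcal{C}_{k}(\mc)$, $\pi_{1}=\mathcal{B}_{k}$) gives $\pi_{1}(\mathcal{C}^{1,n}_{k})=\mathcal{B}_{k}/\langle\Delta^{2}_{k}\rangle$.

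For the unordered spaces, $\mathcal{F}^{i,n}_{k}\to\mathcal{C}^{i,n}_{k}$ is a free $\Sigma_{k}$-quotient, so $1\to\pi_{1}(\mathcal{F}^{i,n}_{k})\to\pi_{1}(\mathcal{C}^{i,n}_{k})\to\Sigma_{k}\to 1$; whenever $\mathcal{F}^{i,n}_{k}$ is simply connected this yields $\pi_{1}(\mathcal{C}^{i,n}_{k})\cong\Sigma_{k}$, and the $i=1$ cases are done above (with $n=1$ giving $\pi_{1}(\mathcal{C}^{1,1}_{k})=\mathcal{B}_{k}$). The last case, $\mathcal{C}^{n,n}_{n+1}\cong\mathrm{Aff}(n,\mc)/\Sigma_{n+1}$, is the hard one. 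Here $1\to\mz\to\pi_{1}(\mathcal{C}^{n,n}_{n+1})\to\Sigma_{n+1}\to 1$, and the extension is central because right translations of the connected group $\mathrm{Aff}(n,\mc)$ are homotopic to the identity and hence act trivially on $\pi_{1}$; moreover a generator of the central $\mz$ is a meridian of the (complex codimension one, irreducible) locus of non-spanning configurations. Choosing loops $s_{1},\dots,s_{n}$ that lift the adjacent transpositions — a minimal interchange of two neighbouring points carried out inside the spanning locus — one must check (a) that the $s_{i}$ satisfy the Artin relations $(A)_{n+1}$, and (b) that $s_{i}^{2}$ equals that meridian for every $i$; both reduce to a sign count via the homomorphism $w\colon\pi_{1}(\mathcal{C}^{n,n}_{n+1})\to\mz$ that assigns to a loop the winding number of the square of the determinant of its linear part, which is injective on the central $\mz$. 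This produces a surjection $\mathcal{B}_{n+1}/\langle\sigma_{1}^{2}=\cdots=\sigma_{n}^{2}\rangle\twoheadrightarrow\pi_{1}(\mathcal{C}^{n,n}_{n+1})$ of central extensions of $\Sigma_{n+1}$ by $\mz$ which is the identity on $\Sigma_{n+1}$ and an isomorphism on the kernels, hence an isomorphism by the five lemma. The main obstacle is exactly this final case — constructing the lifts $s_{i}$ and verifying the braid relations together with the equalities $s_{i}^{2}=$ meridian; everything else is routine fibration bookkeeping plus the classical values of $\pi_{1}(\mathcal{F}_{k}(\mc^{m}))$ and $\pi_{1}(\mathcal{C}_{k}(\mc^{m}))$.
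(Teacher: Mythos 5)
Your proposal is correct in outline, but it reaches the result by a genuinely different route in three of the four main steps, and in each case the trade-off is worth recording. (1) For the generic strata with $i\ge 2$ you replace the paper's entire Section 4 — the good cover of $\mathcal{F}_k^{n,n}$ by the sets $\mathcal{F}_k^{(A),n}$, the auxiliary fibrations onto $\mathcal{F}_{n+1}^{n,n}$ with fibers $M_{k-n-1}$, and the Van Kampen bookkeeping — by the observation that the complement of the top stratum in $\mathcal{F}_k(\mc^m)$ is a closed union of submanifolds of complex codimension $(m-j)(k-j-1)\ge 2$ whenever $k\ne m+1$, so that transversality transfers simple connectivity from $\mathcal{F}_k(\mc^m)$; your codimension formula checks out against the paper's dimension count $\dim\mathcal{F}_k^{i,n}=ki+(i+1)(n-i)$, and this is a substantially shorter argument at the cost of invoking the standard general-position lemma rather than staying with elementary Van Kampen gluing. (2) For $\mathcal{F}_{n+1}^{n,n}$ you identify the space outright with $\mathrm{Aff}(n,\mc)\simeq GL(n,\mc)$, whereas the paper runs an induction on the fibrations $\mathcal{F}_k^{k-1,n}\to\mathcal{F}_{k-1}^{k-2,n}$ with fibers $\mc^n\setminus\mc^{k-2}$; your identification is cleaner and, crucially, it is what powers your treatment of the unordered case, since it makes the centrality of the kernel (deck transformations are right translations in a connected group) and the determinant homomorphism available for free. (3) For $\pi_1(\mathcal{C}_{n+1}^{n,n})$ you use essentially the same generators as the paper (half-rotation interchanges of adjacent frame points), but where the paper exhibits explicit liftings and homotopies to verify $p_*(T)=\sigma_i^2$ and the braid relations before a change of generators, you verify everything at once through the homomorphism $w$ given by the winding number of $\det^2$ of the linear part, which is injective on the central $\mz$, followed by a five-lemma comparison of central extensions; this is slicker, though you should actually carry out the one computation it rests on (for the half rotation in the plane of $e_i,e_{i+1}$ the determinant path is $t\mapsto e^{i\pi t}$, so $w(s_i)=1$ while $w(T)=2$, giving $s_i^2=T$), and you should note that the kernel of $\mathcal{B}_{n+1}/\langle\sigma_1^2=\cdots=\sigma_n^2\rangle\to\Sigma_{n+1}$ is the cyclic group generated by the common image $c$ of the $\sigma_i^2$ (central because $c=\sigma_j^2$ commutes with each $\sigma_j$, and the $\alpha_{ij}$ are conjugates of the $\sigma_i^2$), so that the five lemma applies. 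The $i=1$ computation is the same as the paper's up to presentation: your pencil/clutching description of the boundary map $\pi_2(Graff^1(\mc^n))\to\mathcal{PB}_k$ is exactly the paper's lift $\tilde g$ whose boundary is the rigid rotation $z\mapsto z\cdot(1,\dots,k)$, identified with $D_k$ in Lemma 3.1.
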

\bigskip

\noindent Our paper begins by defining a geometric fibration that connects the spaces $\mathcal{F}_{k}^{i,n}$ to the affine grasmannian manifolds $Graff^{i}(\mc^n)$. In Section \ref{s:due} we compute the fundamental groups for two special cases: points on a line $\mathcal{F}_{k}^{1,n}$ and points in general position $\mathcal{F}_{k}^{k-1,n}$. Then, in Section \ref{s:main}, we describe an open cover of 
$\mathcal{F}_{k}^{n,n}$ and, using a Van-Kampen argument, we prove the main result for the ordered configuration spaces. In Section \ref{five} we prove the main result for the unordered configuration spaces.

\section{Geometric fibrations on the affine \\ grassmannian manifold} 
\label{s:uno}

We consider $\mc^n$ with its affine structure. If $p_1,\ldots,p_k\in\mc^n$ we write \\ $<p_1,\ldots,p_k>$ for the affine subspace generated by $p_1,\ldots,p_k$. We stratify the
configuration spaces $\mathcal{F}_k(\mc^n)$ with complex submanifolds as follows:
$$
\mathcal{F}_k(\mc^n)=\mathop{\coprod}\limits_{i=0}^{n} \mathcal{F}_{k}^{i,n}\,\,,
$$
where $\mathcal{F}_{k}^{i,n}$ is the ordered configuration space of all $k$
distinct points $p_1,\ldots,p_k$ in $\mc^n$ such that the dimension dim$<p_1,\ldots,p_k>=i$.

\begin{remark}\label{rm:1} The following easy facts hold:
\begin{enumerate}
\item ${\mathcal{F}_{k}^{i,n}}\neq \emptyset$ if and only if $i\leq
\min(k+1,n)$; so, in order to get a non empty set, $i=0$ forces $k=1$, and $\mathcal{F}_{1}^{0,n}=\mc^n$.

\item ${\mathcal{F}_{k}^{1,1}}=\mathcal{F}_k(\mc)$,\,\,\,${\mathcal{F}_{2}^{1,n}}=\mathcal{F}_2(\mc^n)$;

\item the adjacency of the strata is given by
$$
\overline{\mathcal{F}_{k}^{i,n}}=\mathcal{F}_{k}^{1,n}\coprod\ldots\coprod\mathcal{F}_{k}^{i,n}.
$$
\end{enumerate}
\end{remark}

By the above remark, it follows that the case $k=1$ is trivial, so from now on we will consider  $k>1$ (and hence $i>0$).

For $i\leq n$, let $Graff^{i}(\mc^n)$ be the affine grassmannian manifold pa\-ra\-me\-tri\-zing $i$-dimensional affine subspaces of $\mc^n$.\\
 We recall that the map $Graff^{i}(\mc^n)\to Gr^i(\mc^n)$ which sends an affine subspace to its direction, exibits $Graff^{i}(\mc^n)$ as a vector bundle over the ordinary grassmannian manifold $Gr^i(\mc^n)$ with fiber of dimension $n-i$. Hence, dim$Graff^i(\mc^n)=(i+1)(n-i)$ and it has the same homotopy groups as $Gr^i(\mc^n)$. In particular, affine grassmannian manifolds are simply connnected and $\pi_2(Graff^i(\mc^n))\cong\mz$ if $i<n$ (and trivial if $i=n$). We can also identify a generator for $\pi_2(Graff^i(\mc^n))$ given by the map
$$g:(D^2,S^1)\rightarrow(Graff^{i}(\mc^n), L_1),\ \ \ g(z)=L_z
$$
where $L_z$ is the linear subspace of $\mc^n$ given by the equations $$(1-|z|)X_1-zX_2 =X_{i+2}=\cdots=X_n=0 \,\,\, .$$  
Affine grasmannian manifolds are related to the spaces $\mathcal{F}_{k}^{i,n}$
through the following fibrations. 

\begin{proposition}
\label{pr:1} The projection
$$
\gamma:\mathcal{F}_{k}^{i,n}\rightarrow Graff^i(\mc^n)
$$
given by
$$
(x_1,\ldots,x_k)\mapsto <x_1,x_2,\ldots,x_k>
$$
is a locally trivial fibration with fiber $\mathcal{F}_{k}^{i,i}$.
\end{proposition}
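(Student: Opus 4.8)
The plan is to exhibit local trivializations of $\gamma$ directly over a convenient open cover of $Graff^i(\mc^n)$. First I would fix an affine subspace $L_0 \in Graff^i(\mc^n)$ and choose affine coordinates on $\mc^n$ so that $L_0$ is the coordinate subspace $\{X_{i+1}=\cdots=X_n=0\}$. Over the open set $U$ of all affine $i$-subspaces $L$ that can be written as graphs over $L_0$ — i.e.\ subspaces of the form $\{(\,y,\,A(y)\,) : y \in \mc^i\}$ for a unique affine map $A = A_L : \mc^i \to \mc^{n-i}$ depending holomorphically (hence continuously) on $L$ — one has a canonical affine isomorphism $\varphi_L : L \xrightarrow{\;\sim\;} \mc^i = L_0$, namely projection onto the first $i$ coordinates, again varying continuously with $L$. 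These $U$'s cover $Graff^i(\mc^n)$ as $L_0$ ranges over all coordinate $i$-subspaces (equivalently, by acting with the affine group, over all of $Graff^i(\mc^n)$).

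Next I would define the trivialization
$$
\Phi : \gamma^{-1}(U) \longrightarrow U \times \mathcal{F}_k^{i,i},
\qquad
\Phi(x_1,\ldots,x_k) = \bigl(\,<x_1,\ldots,x_k>,\ (\varphi_L(x_1),\ldots,\varphi_L(x_k))\,\bigr),
$$
where $L = <x_1,\ldots,x_k>$. One must check this is well defined: since $x_1,\ldots,x_k \in L$ and $\varphi_L$ is an affine isomorphism onto $\mc^i$, the points $\varphi_L(x_j)$ are distinct and still affinely span $\mc^i$ (affine isomorphisms preserve dimension of affine hulls), so the image indeed lies in $\mathcal{F}_k^{i,i}$. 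Continuity of $\Phi$ follows from continuity of $L \mapsto \varphi_L$ together with continuity of $\gamma$. The inverse sends $(L,(z_1,\ldots,z_k))$ to $(\varphi_L^{-1}(z_1),\ldots,\varphi_L^{-1}(z_k))$, which is the composite of $(z_j) \mapsto$ (embed into $\mc^n$ via the graph map for $L$), again continuous in both arguments; and one checks $\gamma$ of this point is exactly $L$ because $\varphi_L^{-1}$ maps a spanning set of $\mc^i$ to a spanning set of $L$. Finally $\mathrm{pr}_U \circ \Phi = \gamma$ by construction, so $\Phi$ is a homeomorphism over $U$ commuting with the projections, and the fiber $\gamma^{-1}(L_0) = \{(x_j) : x_j \in L_0 \text{ distinct, affinely spanning } L_0\}$ is exactly $\mathcal{F}_k^{i,i}$ after identifying $L_0 \cong \mc^i$.

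The only genuinely delicate point is the holomorphic (or merely continuous) dependence of the graphing map $A_L$, and hence of $\varphi_L$ and $\varphi_L^{-1}$, on $L \in U$; this is a standard fact about the affine Grassmannian — it is precisely the statement that $Graff^i(\mc^n) \to Gr^i(\mc^n)$ is a holomorphic vector bundle and that the chart sending $L$ to $(\text{direction of }L,\ A_L)$ is biholomorphic — so I would either cite this or spend one line deriving it from the corresponding chart on $Gr^i(\mc^n)$ (solving a linear system whose coefficients depend holomorphically on the Plücker/graph data, with nonvanishing determinant on $U$). Everything else is routine bookkeeping: checking the two composites $\Phi \circ \Phi^{-1}$ and $\Phi^{-1} \circ \Phi$ are the identity, and that distinctness and affine-span dimension are preserved under the affine isomorphisms $\varphi_L^{\pm 1}$.
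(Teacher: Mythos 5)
Your proposal is correct and follows essentially the same route as the paper: the paper also covers $Graff^i(\mc^n)$ by open sets of subspaces transverse to a fixed complementary $(n-i)$-dimensional linear subspace $L_0$ and trivializes via the affine isomorphisms $\varphi_V(x)=(L_0+x)\cap V_0$, which in your coordinates is exactly the projection of a graph onto its base. Your graph description and the continuity discussion are just a coordinate rephrasing of the same construction, so there is nothing substantive to add.
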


\noindent\textit{Proof}. Take $V_0\in Graff^i(\mc^n)$ and choose 
$L_{0}\in Gr^{n-i}(\mc^n)$ such that $L_0$ intersects $V_0$ in one point and
 define $\mathcal{U}_{L_0}$, an open neighborhood of $V_0$, by
$$
\mathcal{U}_{L_0}=\{V\in Graff^i(\mc^n)|\ L_0\ \hbox{intersects}\  V\ \hbox{in one point}\}.
$$

For $V\in \mathcal{U}_{L_0}$, define the affine isomorphism
$$
\varphi_V:V\rightarrow V_0,\,\,\varphi_V(x)=(L_0+x)\cap V_0.
$$

The local trivialization is given by the homeomorphism
$$
f:\gamma^{-1}(\mathcal{U}_{L_0})\rightarrow \mathcal{U}_{L_0}\times \mathcal{F}_k^{i,i}(V_0)
$$
$$
y=(y_1,\ldots,y_k)\mapsto\big(\gamma(y),(\varphi_{\gamma(y)}(y_1),\ldots,\varphi_{\gamma(y)}(y_k))\big)
$$
making the following diagram commute (where $\mathcal{F}_k^{i,i}(V_0)=\mathcal{F}_{k}^{i,i}$ upon choosing a coordinate system in $V_0$)

\begin{center}
\begin{picture}(360,120)
\thicklines
\put(77,90){\vector(2,-1){45}}
\put(107,100){\vector(1,0){70}}
\put(197,90){\vector(-2,-1){45}}
\put(55,98){${\gamma}^{-1}(\mathcal{U}_{L_0})$}
\put(184,98){$\mathcal{U}_{L_0}\times \mathcal{F}_{k}^{i,i}$}
\put(130,60){$ \mathcal{U}_{L_0}$}
\put(83,70){$\gamma$}
\put(180,70){$pr_1$}
\put(134,105){$f$}
\put(350,60){$\square$}
\end{picture}
\end{center}

\vspace{-1.3cm}

\begin{corollary}
The complex dimensions of the strata are given by
\end{corollary}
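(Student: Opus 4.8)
\noindent\textit{Proof (plan)}. I expect the table to state that, for every non-empty stratum,
$$
\dim_{\mc}\mathcal{F}_{k}^{i,n}=(i+1)(n-i)+ki ,
$$
and the whole computation should be a formal consequence of Proposition~\ref{pr:1}. First I would invoke that proposition: $\gamma:\mathcal{F}_{k}^{i,n}\to Graff^i(\mc^n)$ is locally trivial with fiber $\mathcal{F}_{k}^{i,i}$, and the trivialization $f$ built in its proof is, on each fiber, one of the affine isomorphisms $\varphi_V$ and depends holomorphically on $V$, so it is a biholomorphism onto $\mathcal{U}_{L_0}\times\mathcal{F}_{k}^{i,i}$. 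Hence $\mathcal{F}_{k}^{i,n}$ is a complex manifold locally biholomorphic to a product, and therefore
$$
\dim_{\mc}\mathcal{F}_{k}^{i,n}=\dim_{\mc}Graff^i(\mc^n)+\dim_{\mc}\mathcal{F}_{k}^{i,i}.
$$
Since $\dim_{\mc}Graff^i(\mc^n)=(i+1)(n-i)$ was recalled in Section~\ref{s:uno}, the only quantity left to determine is $\dim_{\mc}\mathcal{F}_{k}^{i,i}$.

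For the fiber I would show that $\mathcal{F}_{k}^{i,i}$ is a non-empty open subset of $(\mc^i)^k\cong\mc^{ki}$, so that $\dim_{\mc}\mathcal{F}_{k}^{i,i}=ki$. Indeed $(x_1,\dots,x_k)$ lies in $\mathcal{F}_{k}^{i,i}$ exactly when the $x_j$ are pairwise distinct \emph{and} the vectors $x_2-x_1,\dots,x_k-x_1$ span $\mc^i$: the first is the complement of the big diagonal, the second is the non-vanishing of at least one maximal minor of the $i\times(k-1)$ matrix with those columns, and both are Zariski-open conditions. Non-emptiness holds whenever the stratum is non-empty (which, as $k$ points span an affine subspace of dimension at most $k-1$, means $i\le k-1$), as one sees by taking the affine frame $0,e_1,\dots,e_i$ completed by $k-i-1$ further distinct points. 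Substituting, $\dim_{\mc}\mathcal{F}_{k}^{i,n}=(i+1)(n-i)+ki$, in agreement with the identifications $\mathcal{F}_{1}^{0,n}=\mc^n$, $\mathcal{F}_{k}^{1,1}=\mathcal{F}_k(\mc)$ and $\mathcal{F}_{2}^{1,n}=\mathcal{F}_2(\mc^n)$ of Remark~\ref{rm:1}.

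I do not anticipate a real obstacle here: the statement is essentially ``read off'' from Proposition~\ref{pr:1} and the known dimension of the affine Grassmannian. The only point where an actual line of argument is needed is the fiber count, i.e.\ checking that ``pairwise distinct and affinely spanning'' cuts out a non-empty Zariski-open subset of $(\mc^i)^k$, and even that is routine linear algebra.
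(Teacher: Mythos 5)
Your proposal is correct and follows exactly the paper's route: the dimension formula is read off from the locally trivial fibration of Proposition~\ref{pr:1} together with $\dim Graff^i(\mc^n)=(i+1)(n-i)$, and the fiber count comes from observing that $\mathcal{F}_{k}^{i,i}$ is a non-empty Zariski open subset of $(\mc^i)^k$ for $k\geq i+1$, which is precisely the one-line justification the paper gives. You simply spell out the routine details (openness of the two defining conditions and non-emptiness) that the paper leaves implicit.
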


\vspace{-0.7cm}
$$
\hbox{dim}(\mathcal{F}_{k}^{i,n})=\hbox{dim}(\mathcal{F}_{k}^{i,i})+\hbox{dim}( Graff^{i}(\mc^n))=ki+(i+1)(n-i).
$$

\begin{proof}
$\mathcal{F}_{k}^{i,i}$ is a Zariski open subset in $(\mc^i)^k$ for
$k\geq i+1$.
\end{proof}

The canonical embedding
$$
\mathbb{C} ^{m}\longrightarrow\mathbb{C}^n,\,\,\,\,\,\{z_0,\ldots,z_m\}%
\mapsto\{z_0,\ldots,z_m,0,\ldots,0\}
$$
induces, for $i\leq m$, the following commutative diagram of fibrations

\begin{picture}(300,100)
\thicklines
\put(215,75){\vector(0,-1){30}}
\put(147,80){\vector(1,0){40}}
\put(127,75){\vector(0,-1){30}}
\put(56,75){\vector(0,-1){30}}
\put(140,35){\vector(1,0){40}}
\put(75,80){\vector(1,0){40}}
\put(75,35){\vector(1,0){40}}
\put(120,80){$\mathcal{F}_{k}^{i,m}$}
\put(120,30){$\mathcal{F}_{k}^{i,n}$}
\put(190,80){$ Graff^i(\mc ^{m})$}
\put(190,30){$Graff^i(\mc ^n)$}
\put(50,80){$\mathcal{F}_{k}^{i,i}$}
\put(50,30){$\mathcal{F}_{k}^{i,i}$}
\end{picture}

\noindent which gives rise, for $i<m$, to the commutative diagram of
homotopy groups \vspace{-0.7cm}

\begin{picture}(300,120)
\thicklines
\put(230,75){\vector(0,-1){30}}
\put(167,80){\vector(1,0){30}}
\put(137,75){\vector(0,-1){30}}
\put(80,75){\vector(0,-1){30}}
\put(167,35){\vector(1,0){30}}
\put(85,80){\vector(1,0){30}}
\put(85,35){\vector(1,0){30}}
\put(250,80){\vector(1,0){30}}
\put(250,35){\vector(1,0){30}}
\put(120,80){$\pi_1(\mathcal{F}_{k}^{i,i}$)}
\put(120,30){$\pi_1(\mathcal{F}_{k}^{i,i}$)}
\put(200,80){$\pi_1(\mathcal{F}_{k}^{i,m}$)}
\put(200,30){$\pi_1(\mathcal{F}_{k}^{i,n}$)}
\put(285,76){1}
\put(285,32){1}
 \put(35,80){\vector(1,0){30}}
\put(35,35){\vector(1,0){30}} \put(15,78){$\ldots$}
\put(15,33){$\ldots$} \put(75,80){$\mz$}
\put(73,30){$\mz$} \put(85,55){$\cong$} \put(140,55){$\cong$}
\end{picture}

\noindent where the leftmost and central vertical homomorphisms are isomorphisms.\\
Then, also the rightmost vertical homomorphisms are isomorphisms, and we have
\begin{equation}\label{eq:1}
\pi_1(\mathcal{F}_{k}^{i,n})\cong\pi_1(\mathcal{F}_{k}^{i,m})\cong\pi_1(\mathcal{F}_{k}^{i,i+1})\ {\rm for}\ i<m\leq n.
\end{equation}
Thus, in order to compute $\pi_1(\mathcal{F}_{k}^{i,n})$ we can restrict to the case $k \geq n$ (note that $k>i$), computing the fundamental groups $\pi_1(\mathcal{F}_{k}^{i,i+1})$, and for this we can use the homotopy exact sequence of the fibration from Proposition \ref{pr:1}, which leads us to compute the fundamental groups $\pi_1(\mathcal{F}_{k}^{i,i})$.
This is equivalent, simplifying notations, to compute  $\pi_1(\mathcal{F}_{k}^{n,n})$ when $k \geq n+1$.\\
We begin by studying two special cases, points on a line and points in general position.
\section{Special cases}\label{s:due}
\paragraph{The case $i=1$, points on a line.\\[1em]}
 By remark \ref{rm:1} the space ${\mathcal{F}_{k}^{1,1}}=\mathcal{F}_k(\mc)$ for all $k \geq 2$ and the fibration in Proposition \ref{pr:1} gives rise to the exact sequence
\begin{equation}\label{eq:1bis}
\mz=\pi_2(Graff^1(\mc^2))\mathop{\longrightarrow}\limits^{\delta_*}\mathcal{PB}_n=\pi_1(\mathcal{F}_{k}(\mc))\to\pi_1(\mathcal{F}_{k}^{1,2})\to 1\ .
\end{equation}
It follows that $\pi_1(\mathcal{F}_{k}^{1,2})\cong \mathcal{PB}_n/{\rm Im}\delta_*$.
Since $\pi_2(Graff^1(\mc^2))=\mz$, we need to know the image of a generator of this group in $\mathcal{PB}_n$.
Taking as generator the map
$$
g:(D^2,S^1)\to (Graff^1(\mc^2),L_1),\ \ g(z)=L_z,
$$ 
where $L_z$ is the line of equation $(1-|z|)X_1=zX_2$, we chose the lifting
$$
\tilde g:(D^2,S^1)\to (\mathcal{F}_{k}^{1,2},\mathcal{F}_{k}(L_1))
$$ 
$$
\tilde g(z)=\big((z,1-|z|),2(z,1-|z|),\ldots,k(z,1-|z|)\big)
$$
whose restriction to $S^1$ gives the map
$$
\gamma:S^1\longrightarrow \mathcal{F}_{k}(L_1)=\mathcal{F}_{k}(\mathbb{C})
$$
$$
\gamma(z)=((z,0),(2z,0),\ldots,(kz,0))
$$

\begin{lemma}(see \cite{BS})
\label{l.6} The homotopy class of the map $\gamma$
corresponds to the following pure braid in $\pi_1(\mathcal{F}_{k}(\mathbb{C
}))$:
$$
[\gamma]=\alpha_{12}(\alpha_{13}\alpha_{23})\ldots(\alpha_{1k}\alpha_{2k}%
\ldots\alpha_{k-1,k})=D_{k}\,\,.
$$
\end{lemma}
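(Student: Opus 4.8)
\noindent The plan is to recognise $\gamma$ as a very concrete loop and then to expand it. Observe first that $\gamma$ is exactly the rigid rotation of the collinear configuration $\{(1,0),(2,0),\dots,(k,0)\}$ about the origin through one full turn; equivalently, $[\gamma]$ is the image of the generator of $\pi_1(SO(2))\cong\mz$ under the map $\rho\mapsto\big(\rho(1,0),\dots,\rho(k,0)\big)$. Hence $[\gamma]$ is the full twist $\Delta_k^2$ of $\mathcal{PB}_k$ (the square of the Garside element, which generates the centre of $\mathcal{B}_k$), and the lemma becomes the classical expansion of the full twist in the standard pure braid generators $\alpha_{ij}$. I would prove that expansion by induction on $k$.

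For $k=2$ the map $\mathcal{F}_2(\mc)\to\mc^{\ast}$, $(x,y)\mapsto x-y$, is a homotopy equivalence and carries $\gamma$ to the loop $z\mapsto -z$ of winding number one, so $[\gamma]=\alpha_{12}=D_2$. For the inductive step I would use the Fadell--Neuwirth fibration $p\colon\mathcal{F}_k(\mc)\to\mathcal{F}_{k-1}(\mc)$ forgetting the last point, with fibre $\mc$ minus $k-1$ points; adding a point at radius $1+\max_i|x_i|$ gives a section, so $\mathcal{PB}_k\cong\langle\alpha_{1k},\dots,\alpha_{k-1,k}\rangle\rtimes\mathcal{PB}_{k-1}$. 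The projected loop $p\circ\gamma$ is the analogous loop on $k-1$ points, so $p_{\ast}[\gamma]=D_{k-1}$ by the inductive hypothesis; it remains to identify the component of $[\gamma]$ in the free fibre group.

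For this I would homotope $\gamma$, rel the basepoint $(1,2,\dots,k)$, to a concatenation $L_1\ast L_2$, where along $L_1$ the first $k-1$ points perform the rigid rotation while the $k$-th point is parked at $(k,0)$ and stays inert, and along $L_2$ the first $k-1$ points are held fixed on the real axis while the $k$-th point is dragged once counterclockwise around the whole cluster. Then $[L_1]=D_{k-1}$ (the first $k-1$ strands trace the $(k-1)$-point loop, to which the inductive hypothesis applies, and the last strand does nothing), while reading $L_2$ off the standard planar picture gives $[L_2]=\alpha_{1k}\alpha_{2k}\cdots\alpha_{k-1,k}$; hence $[\gamma]=D_{k-1}\,(\alpha_{1k}\alpha_{2k}\cdots\alpha_{k-1,k})=D_k$. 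The homotopy $\gamma\simeq L_1\ast L_2$ merely ``desynchronises'' the outermost point from the rotating cluster, and the only geometric point to verify is that its path can be kept disjoint from the $k-1$ moving points, which is clear since during the rotation phase that point sits at radius $k$ while the cluster stays inside radius $k-1$.

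The main obstacle is precisely the bookkeeping in this last step: one must orient $\gamma$ and fix the meaning of the $\alpha_{ij}$ so that the fibre loop $L_2$ comes out as the product $\alpha_{1k}\cdots\alpha_{k-1,k}$ in this order and on the left (rather than as its reverse or a conjugate), and so that $D_{k-1}$ appears as the genuine prefix of $D_k$. Taking $\alpha_{ij}$ to be the loop in which the $j$-th strand encircles the $i$-th from the right makes all the conventions consistent. Alternatively one may bypass the homotopy altogether and quote the classical identity $\Delta_k^2=\alpha_{12}(\alpha_{13}\alpha_{23})\cdots(\alpha_{1k}\cdots\alpha_{k-1,k})$ for the full twist, which is the route taken in \cite{BS}.
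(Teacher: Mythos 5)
Your argument is correct, and it is worth noting that the paper itself gives no written proof of this lemma: it simply cites \cite{BS} and displays the braid diagram of $\gamma$, from which the reader is meant to read off $D_k$ column by column. Your route is therefore a genuinely different, self-contained one. Recognising $\gamma$ as the rigid rotation of the collinear configuration, hence as the full twist $\Delta_k^2$, is exactly the right first step (and is consistent with the identification of $D_k$ with $\Delta_k^2$ made in the paper's introduction). The inductive expansion via the Fadell--Neuwirth fibration is sound: the base case via $(x,y)\mapsto x-y$ is clean, the desynchronisation homotopy $\gamma\simeq L_1\ast L_2$ works because throughout the interpolation the $k$-th point stays on the circle of radius $k$ while the cluster stays inside radius $k-1$, and the fibre component of $[\gamma]$ is then the boundary loop of a disk containing the punctures $1,\dots,k-1$, which is $\alpha_{1k}\alpha_{2k}\cdots\alpha_{k-1,k}$ with the usual conventions. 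You correctly isolate the only delicate point, namely the orientation and ordering conventions for the $\alpha_{ij}$; since the paper fixes these only implicitly through its figure, this is a matter of bookkeeping rather than a gap. What the paper's picture buys is immediacy; what your induction buys is a proof that does not rest on trusting a figure or an external reference, and that makes explicit where each block $\alpha_{1j}\cdots\alpha_{j-1,j}$ of $D_k$ arises.
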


\begin{picture}(250,180)

\thicklines

\put(90,58){\line(0,1){89}}
\put(90,36){\line(0,1){13}}
\put(90,10){\line(0,1){13}}
\put(10,80){\line(3,-1){198}}
\put(10,80){\line(3,1){40}}
\put(62,97){\line(3,1){25}}
\put(94,109){\line(3,1){111}}
\put(41,81){\line(1,1){46}}
\put(94,134){\line(1,1){11}}
\put(41,81){\line(1,-1){12}}
\put(62,59){\line(1,-1){45}}
\put(87.5,151){1}\put(104,151){2}\put(204,151){$k$}\put(150,151){$\dots$}

\put(87.5,-3){1}\put(104,-3){2}\put(204,-3){$k$}\put(150,-3){$\dots$}
\end{picture}
\vspace{1cm}

From the above Lemma and the exact sequence in (\ref{eq:1bis}) we get that the image in $\pi_1(\mathcal{F}_{k}(\mc))$ of the generator of $\pi_2(Graff^1(\mc^2))$ is $D_k$ and the following theorem is proved.

\begin{theorem} For $n>1$, the fundamental group of the configuration space of $k$ distinct points in $\mc^{n}$ lying on a line has the following presentation (not depending on $n$)
$$\pi_1(\mathcal{F}_{k}^{1,n})
=\big<\alpha_{ij},\ 1\leq i<j\leq k\ \big|\ (YB3)_k,(YB4)_k,\, D_{k}=1\big>\ .$$
\end{theorem}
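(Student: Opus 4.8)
The plan is to reduce to the case $n=2$ and then read the presentation off the fibration exact sequence already at hand. By the isomorphisms (\ref{eq:1}), applied with $i=1$ and $m=2$, one has $\pi_1(\mathcal{F}_{k}^{1,n})\cong\pi_1(\mathcal{F}_{k}^{1,2})$ for every $n>1$, so it suffices to treat $\pi_1(\mathcal{F}_{k}^{1,2})$. For this, use the fibration $\gamma:\mathcal{F}_{k}^{1,2}\to Graff^1(\mc^2)$ of Proposition \ref{pr:1}, whose fiber is $\mathcal{F}_{k}^{1,1}=\mathcal{F}_k(\mc)$ by Remark \ref{rm:1}. Since $Graff^1(\mc^2)$ is simply connected with $\pi_2(Graff^1(\mc^2))\cong\mz$, the relevant segment of its homotopy exact sequence is precisely (\ref{eq:1bis}), and exactness there yields a surjection $\mathcal{PB}_k=\pi_1(\mathcal{F}_k(\mc))\twoheadrightarrow\pi_1(\mathcal{F}_{k}^{1,2})$ with kernel ${\rm Im}\,\delta_*$, that is, $\pi_1(\mathcal{F}_{k}^{1,2})\cong\mathcal{PB}_k/{\rm Im}\,\delta_*$.

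It remains to identify ${\rm Im}\,\delta_*$ with the normal closure of $D_k$. Because $\pi_2(Graff^1(\mc^2))\cong\mz$ is generated by the class of $g(z)=L_z$, the subgroup ${\rm Im}\,\delta_*$ is cyclic, generated by $\delta_*[g]$; and, using the explicit lift $\tilde g$ of $g$ fixed above, $\delta_*[g]$ is represented by the boundary loop $\gamma=\tilde g|_{S^1}$, which by Lemma \ref{l.6} is the pure braid $D_k$. Hence ${\rm Im}\,\delta_*=\langle D_k\rangle$. As the kernel of a group homomorphism this subgroup is normal in $\mathcal{PB}_k$, so the (a priori larger) normal closure of $D_k$ is already contained in it; since $\langle D_k\rangle$ is in turn generated by $D_k$, the two coincide, and $\pi_1(\mathcal{F}_{k}^{1,2})\cong\mathcal{PB}_k/\langle\!\langle D_k\rangle\!\rangle$.

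Finally, substituting into this quotient the presentation $\mathcal{PB}_k=\langle\alpha_{ij}\mid(YB3)_k,(YB4)_k\rangle$ recalled in the Introduction turns the passage to $\mathcal{PB}_k/\langle\!\langle D_k\rangle\!\rangle$ into the addition of the single relation $D_k=1$, which gives exactly the asserted presentation $\langle\alpha_{ij},\,1\le i<j\le k\mid(YB3)_k,(YB4)_k,\,D_k=1\rangle$; since this presentation mentions $n$ nowhere, the group is independent of $n>1$. The only step carrying genuine content is the identification of the homotopy class of $\gamma$ with $D_k$, i.e. Lemma \ref{l.6} — this is where the affine geometry of the moving configuration really enters — while everything else is a routine chase through the homotopy sequence of the fibration together with the reduction (\ref{eq:1}).
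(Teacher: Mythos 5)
Your proposal is correct and follows essentially the same route as the paper: reduce to $n=2$ via the stabilization isomorphisms, read off $\pi_1(\mathcal{F}_{k}^{1,2})\cong\mathcal{PB}_k/{\rm Im}\,\delta_*$ from the homotopy exact sequence of the fibration over $Graff^1(\mc^2)$, and identify $\delta_*$ of the generator with $D_k$ via the explicit lift and Lemma \ref{l.6}. Your added remark that ${\rm Im}\,\delta_*$, being a kernel, is already normal — so that $\langle D_k\rangle$ coincides with the normal closure and the quotient is presented by adjoining the single relation $D_k=1$ — is a small point the paper leaves implicit, and it is handled correctly.
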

\bigskip

\paragraph{The case $k=i+1$, points in general position.}

\begin{lemma}
\label{l.7} For $1 < k\leq n+1$, the projection
$$
p:\mathcal{F}_{k}^{k-1,n}\longrightarrow \mathcal{F}_{k-1}^{k-2,n}
,\,\,\,\,(x_1,\ldots,x_{k})\mapsto(x_1,\ldots,x_{k-1})
$$
is a locally trivial fibration with fiber $\mc^n\setminus \mc^{k-2}$
\end{lemma}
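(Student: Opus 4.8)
The plan is to exhibit the fibration directly by constructing, locally over the base, a continuous choice of coordinates on the fibers. Fix a point $(x_1,\ldots,x_{k-1})\in\mathcal{F}_{k-1}^{k-2,n}$; the fiber $p^{-1}(x_1,\ldots,x_{k-1})$ consists of those $x_k\in\mc^n$ such that $x_k\neq x_i$ for $i<k$ and $<x_1,\ldots,x_{k-1},x_k>$ has dimension $k-1$, i.e. $x_k$ does not lie in the affine hyperplane $<x_1,\ldots,x_{k-1}>$ of $<x_1,\ldots,x_{k-1},x_k>$'s ambient $(k-1)$-space — more precisely, since $x_1,\ldots,x_{k-1}$ already span a $(k-2)$-dimensional affine subspace $H:=<x_1,\ldots,x_{k-1}>$, the condition is exactly $x_k\notin H$. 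The points $x_i$ with $i<k$ automatically lie in $H$, so the extra conditions $x_k\neq x_i$ are subsumed. Hence the fiber is $\mc^n\setminus H\cong\mc^n\setminus\mc^{k-2}$ (the complement of an affinely embedded $\mc^{k-2}$), which identifies the fiber abstractly.

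First I would make the local triviality precise. The key observation is that the assignment $(x_1,\ldots,x_{k-1})\mapsto H=<x_1,\ldots,x_{k-1}>$ is a continuous (indeed algebraic) map $\mathcal{F}_{k-1}^{k-2,n}\to Graff^{k-2}(\mc^n)$, and $Graff^{k-2}(\mc^n)$ is locally trivial: around any $H_0$ one can choose an affine complement and express nearby $H$'s as graphs, giving a continuous family of affine isomorphisms $\psi_H:\mc^n\to\mc^n$ carrying $H_0$ to $H$ (and restricting to affine isomorphisms $H_0\to H$). This is essentially the same trivialization device already used in the proof of Proposition \ref{pr:1}. Then over the open set $\mathcal{U}$ of $(x_1,\ldots,x_{k-1})$ whose span lies in the chosen neighborhood of $H_0$, the map
$$
p^{-1}(\mathcal{U})\longrightarrow \mathcal{U}\times(\mc^n\setminus H_0),\qquad (x_1,\ldots,x_k)\mapsto\big((x_1,\ldots,x_{k-1}),\,\psi_{H(x_1,\ldots,x_{k-1})}^{-1}(x_k)\big)
$$
is a homeomorphism commuting with the projections to $\mathcal{U}$, because $x_k\notin H$ if and only if $\psi_H^{-1}(x_k)\notin H_0$. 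Its inverse sends $\big((x_1,\ldots,x_{k-1}),w\big)$ to $(x_1,\ldots,x_{k-1},\psi_{H(x_1,\ldots,x_{k-1})}(w))$, and both maps are continuous since $\psi_H$ depends continuously on $H$ and $H$ depends continuously on $(x_1,\ldots,x_{k-1})$.

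It remains only to check that $p$ is well defined and surjective — surjectivity is clear since $\mathcal{F}_{k-1}^{k-2,n}\neq\emptyset$ for $k-1\leq n+1$ and the fiber is nonempty (a generic point off $H$ works, using $k-1\le n$ so that $H\ne\mc^n$) — and that the fiber $\mc^n\setminus H_0$ is, after an affine change of coordinates making $H_0=\{X_{k-1}=\cdots=X_n=0\}$, precisely $\mc^n\setminus\mc^{k-2}$. The main obstacle, such as it is, lies in setting up the continuous family $\psi_H$ cleanly and verifying the cocycle-free gluing; but this is routine once one notes $Graff^{k-2}(\mc^n)$ is a manifold fibered over the ordinary Grassmannian, exactly as recalled before Proposition \ref{pr:1}, so no genuinely new difficulty arises. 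I would therefore present the argument compactly, emphasizing the identification of the fiber and referring back to the trivialization technique of Proposition \ref{pr:1} for the local-triviality bookkeeping.
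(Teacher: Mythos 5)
Your proof is correct and follows essentially the same route as the paper's: both establish local triviality via a continuous family of affine automorphisms of $\mc^n$ carrying a reference fiber to the varying one, after identifying the fiber as $\mc^n\setminus<x_1,\ldots,x_{k-1}>\cong\mc^n\setminus\mc^{k-2}$. The only difference is that the paper builds these automorphisms directly from the tuple $(x_1,\ldots,x_{k-1})$ (fixing auxiliary points $x_k^0,\ldots,x_{n+1}^0$ and prescribing the image of an affine frame, which makes the continuity and the construction completely explicit), whereas you parametrize them by the span $H\in Graff^{k-2}(\mc^n)$ and defer their construction to the local triviality of the affine Grassmannian --- a cosmetic variation.
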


\begin{proof}
Take $(x_{1}^0,\ldots,x_{k-1}^0)\in \mathcal{F}_{k-1}^{k-2,n}$ and fix
$x_{k}^0,\ldots,x_{n+1}^0\in\mc^n$ such that $<x_{1}^0,\ldots,x_{n+1}^0>=\mc^n$ (that is $<x_{k}^0,\ldots,x_{n+1}^0>$ and $<x_{1}^0,\ldots,x_{k-1}^0>$ are skew subspaces). Define the open neighbourhood $\mathcal{U}$ of $(x_{1}^0,\ldots,x_{k-1}^0)$ by
$$
\mathcal{U}=\{(x_1,\ldots,x_{k-1})\in \mathcal{F}_{k-1}^{k-2,n}|\,\,<x_1,\ldots,x_{k-1},x_{k}^0,\ldots,x_{n+1}^0>=\mc^n\}.
$$
For $(x_1,\ldots,x_{k-1})\in\mathcal{U}$, there exists a unique affine isomorphism $T_{(x_1,\ldots,x_{k-1})}:\mc^n\longrightarrow\mc^n,$ which depends continuously on $(x_1,\ldots,x_{k-1})$, such that
$$
T_{(x_1,\ldots,x_{k-1})}(x_{i}^0)=(x_i)\ {\rm for}\ i=1,\ldots,k-1
$$
and
$$
T_{(x_1,\ldots,x_{k-1})}(x_{i}^0)=(x_{i}^0)\ {\rm for}\ i=k,\ldots,n+1 \,\,\, .
$$
We can define the homeomorphisms $\varphi,\psi$ by :
$$
p^{-1}(\mathcal{U})\mathop{\longleftarrow}\limits_{\psi}^{
\mathop{\longrightarrow}\limits^{\varphi}} \mathcal{U}\times \big(\mc^n\setminus <x_1^0,\ldots,x_{k-1}^0>\big)
$$
$$
\varphi(x_1,\ldots,x_{k-1},x)=((x_1,\ldots,x_{k-1}),T_{(x_1,\ldots,x_{k-1})}^{-1}(x))
$$
$$
\psi((x_1,\ldots,x_{k-1}),y)=(x_1,\ldots,x_{k-1},T_{(x_1,\ldots,x_{k-1})}(y))
$$
satisfying $pr_1\circ\varphi=p$.  
\begin{center}
\begin{picture}(360,120)
\thicklines
\put(75,90){\vector(2,-1){50}}
\put(107,100){\vector(1,0){70}}
\put(175,95){\vector(-1,0){70}}
\put(205,90){\vector(-2,-1){50}}
\put(59,98){$p^{-1}(\mathcal{U})$}
\put(184,98){$\mathcal{U}\times \big(\mc^n\setminus <x_1^0,\ldots,x_{k-1}^0>$\big)}
\put(137,56){$ \mathcal{U}$}
\put(85,70){$p$}
\put(190,70){$pr_1$}
\put(134,105){$\varphi$}
\put(134,85){$\psi$}
\end{picture}
\end{center}
\vspace{-2.3cm}
\end{proof}

As $\mc^n\setminus \mc^{k-2}$ is simply connected when $n>k-1$ and $k>1$, we have
$$
\pi_1(\mathcal{F}_{k}^{k-1,n})\cong\pi_1(\mathcal{F}_{k-1}^{k-2,n})\cong\pi_1(\mathcal{F}_{2}^{1,n})=\pi_1(\mathcal{F}_2(\mc^n))\cong\pi_1(\mathcal{F}_{1}^{0,n})=\pi_1(\mc^n)=0,
$$
in particular $\pi_1(\mathcal{F}_{n}^{n-1,n})=0$.
Moreover, since $\mc^n\setminus \mc^{k-2}$ is homotopically equivalent to an odd dimensional (real) sphere $S^{2(n-k)-1}$, its second homotopy group vanish and we have  
$$
\pi_2(\mathcal{F}_{k+1}^{k,n})\cong\pi_2(\mathcal{F}_{k}^{k-1,n})\cong\pi_2(\mathcal{F}_{1}^{0,n})=\pi_2(\mc^n)=0.
$$
in particular $\pi_2(\mathcal{F}_{n}^{n-1,n})=0$.\\
In the case $k=n+1$, $\mc^n\setminus \mc^{n-1}$ is homotopically equivalent to $\mc^*$, and we obtain the exact sequence:
$$
\pi_2(\mathcal{F}_{n}^{n-1,n})\rightarrow\mz\rightarrow\pi_1(\mathcal{F}_{n+1}^{n,n})\rightarrow\pi_1(\mathcal{F}_{n}^{n-1,n})\rightarrow 0.
$$
By the above remarks, the leftmost and rightmost groups are trivial, so we have that
$\pi_1(\mathcal{F}_{n+1}^{n,n})$ is infinite cyclic.\\
We have proven the following
\begin{theorem} For $n\geq 1$, the configuration space of $k$ distinct points in $\mc^{n}$ in general position $\mathcal{F}_{k}^{k-1,n}$ is simply connected except for $k=n+1$ in which case
$\pi_1(\mathcal{F}_{n+1}^{n,n})=\mz$.
\end{theorem}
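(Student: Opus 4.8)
The plan is to run the fibration tower supplied by Lemma \ref{l.7}, stripping off one point at a time, and to extract $\pi_1$ (together with the auxiliary $\pi_2$) from the long exact homotopy sequence of each fibration. Iterating the locally trivial fibration
$$p:\mathcal{F}_{m}^{m-1,n}\longrightarrow \mathcal{F}_{m-1}^{m-2,n}\qquad(\text{fiber }\ \mc^n\setminus\mc^{m-2})$$
for $m=k,k-1,\ldots,2$ connects $\mathcal{F}_{k}^{k-1,n}$ to $\mathcal{F}_{1}^{0,n}=\mc^n$ through spaces whose fibers are complements of affine subspaces, and such a complement $\mc^n\setminus\mc^{j}$ deformation retracts onto the sphere $S^{2(n-j)-1}$. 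Throughout, Remark \ref{rm:1} pins down for which indices the spaces involved are non-empty, so that the tower makes sense and stops at $\mc^n$.

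For the generic range $n\geq k$ I would argue as follows. In every step of the tower the removed subspace has index $j=m-2\leq k-2\leq n-2$, so each fiber is a sphere of real dimension $\geq 3$, hence $1$-connected; the homotopy exact sequence then yields an isomorphism $\pi_1(\mathcal{F}_{m}^{m-1,n})\cong\pi_1(\mathcal{F}_{m-1}^{m-2,n})$ at each stage, and telescoping down to $\pi_1(\mc^n)=0$ proves that $\mathcal{F}_{k}^{k-1,n}$ is simply connected. The same telescoping, using that those spheres also have vanishing $\pi_2$ and that $\pi_2(\mc^n)=0$, gives $\pi_2(\mathcal{F}_{k}^{k-1,n})=0$ in this range; in particular $\pi_1(\mathcal{F}_{n}^{n-1,n})=\pi_2(\mathcal{F}_{n}^{n-1,n})=0$.

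It remains to treat the exceptional case $k=n+1$. Here a single application of Lemma \ref{l.7} gives
$$p:\mathcal{F}_{n+1}^{n,n}\longrightarrow \mathcal{F}_{n}^{n-1,n}$$
with fiber $\mc^n\setminus\mc^{n-1}$, which is homotopy equivalent to $\mc^{*}$ and hence has $\pi_1\cong\mz$. Feeding the facts $\pi_1(\mathcal{F}_{n}^{n-1,n})=0$ and $\pi_2(\mathcal{F}_{n}^{n-1,n})=0$ from the previous step into the exact sequence
$$\pi_2(\mathcal{F}_{n}^{n-1,n})\longrightarrow\pi_1(\mc^n\setminus\mc^{n-1})\longrightarrow\pi_1(\mathcal{F}_{n+1}^{n,n})\longrightarrow\pi_1(\mathcal{F}_{n}^{n-1,n})\longrightarrow 1$$
collapses it to $0\to\mz\to\pi_1(\mathcal{F}_{n+1}^{n,n})\to 1$, so $\pi_1(\mathcal{F}_{n+1}^{n,n})\cong\mz$. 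For $n=1$ this is also visible directly, since $\mathcal{F}_{2}^{1,1}=\mathcal{F}_2(\mc)\simeq\mc^{*}$.

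The one step needing genuine care is the vanishing of $\pi_2(\mathcal{F}_{n}^{n-1,n})$, i.e. checking that every fiber appearing strictly below $\mathcal{F}_{n}^{n-1,n}$ in the tower is at least $2$-connected; this is exactly the codimension bookkeeping $\mc^n\setminus\mc^{j}\simeq S^{2(n-j)-1}$ with $j\leq n-2$, so $2(n-j)-1\geq 3$. Everything else is a routine iteration of the homotopy long exact sequence of a fibration, together with the elementary identification of the homotopy type of complements of linear subspaces.
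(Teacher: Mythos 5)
Your proposal is correct and takes essentially the same route as the paper: iterate the point-forgetting fibration of Lemma \ref{l.7}, use that the fibers $\mc^n\setminus\mc^{m-2}$ are complements of affine subspaces homotopy equivalent to odd spheres of dimension $\geq 3$ (hence $2$-connected) when $k\leq n$ to telescope $\pi_1$ and $\pi_2$ down to $\mc^n$, and then feed $\pi_1(\mathcal{F}_{n}^{n-1,n})=\pi_2(\mathcal{F}_{n}^{n-1,n})=0$ into the exact sequence of the last fibration, whose fiber is $\mc^n\setminus\mc^{n-1}\simeq\mc^*$, to get $\pi_1(\mathcal{F}_{n+1}^{n,n})\cong\mz$. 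Your codimension bookkeeping $\mc^n\setminus\mc^{j}\simeq S^{2(n-j)-1}$ is in fact the correct form of the sphere-dimension count (the paper's exponent contains a slip), so no changes are needed.
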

We can also identify a generator for $\pi_1(\mathcal{F}_{n+1}^{n,n})$ via the map
\begin{equation}\label{eq:4}
h:S^1\rightarrow\mathcal{F}_{n+1}^{n,n}\ \ \ h(z)=(0,e_1,\ldots e_{n-1},ze_n),
\end{equation}
where $e_1,\ldots e_n$ is the canonical basis for $\mc^n$ (i.e. a loop that goes around the hyperplane $<0,e_1,\ldots e_{n-1}>$).


\section{The general case}
\label{s:main}

From now on we will consider $n,i>1$.\\ 
Let us recall that, by Proposition \ref{pr:1} and equation (\ref{eq:1}), in order to compute the fundamental group of the general case $\mathcal{F}_{k}^{i,n}$, we need to compute  $\pi_1(\mathcal{F}_{k}^{n,n})$ when $k \geq n+1$. To do this, we will cover $\mathcal{F}_{k}^{n,n}$ by open sets with an infinite cyclic fundamental group and then we will apply the Van-Kampen theorem to them. 

\subsection{A \textit{good} cover}
Let $\mathcal{A}=(A_1,\ldots,A_p)$ be a sequence of subsets of $\{1,\ldots,k\}$ and the integers $d_1,\ldots,d_p$ given by
$d_j=|A_j|-1,\,\,\,j=1,\ldots,p$. Let us define
$$
\mathcal{F}_{k}^{\mathcal{A},n}=\{(x_1,\ldots,x_k)\in\mathcal{F}_{k}(\mc^n)\big|\dim<x_i>_{i\in A_j}=d_j\ {\rm for\ }j=1,\ldots ,p \}.
$$

\begin{example} The following easy facts hold:
\label{ex:2.6}
\begin{enumerate}
\item If $\mathcal{A}=\{A_1\},\,\,A_1=\{1,\ldots,k\}$, then $\mathcal{F}
_{k}^{\mathcal{A},n}=\mathcal{F}_{k}^{k-1,n};$

\item if all $A_i$ have cardinality $|A_i|\leq 2$, then $\mathcal{F}_{k}^{
\mathcal{A},n}=\mathcal{F}_k(\mathbb{C}^n);$

\item if $p\geq2$ and $|A_p|\leq 2$, then $\mathcal{F}_k^{(A_1,
\ldots,A_p),n}=\mathcal{F}_k^{(A_1,\ldots,A_{p-1}),n}$;

\item if $p\geq2$ and $A_p\subseteq A_1$, then $\mathcal{F}
_k^{(A_1,\ldots,A_p),n}=\mathcal{F}_k^{(A_1,\ldots,A_{p-1}),n}$;

\item $\bigcup_{j\geq i}\mathcal{F}_{k}^{j,n}=\bigcup_{\mathcal{A}=\{A\},A\in {\binom{\{1,\ldots,k\} }{{i+1}}}}\mathcal{F}_{k}^{\mathcal{A},n}.$
\end{enumerate}
\end{example}

\begin{lemma}
\label{l.6.5} For $A=\{1,\ldots,j+1\}$, $j\leq n$, and $k>j$ the map
$$
P_A:\mathcal{F}_{k}^{(A),n}\rightarrow\mathcal{F}_{j+1}^{j,n},\,\,\,(x_1,%
\dots,x_k)\mapsto(x_1,\ldots,x_{j+1})
$$
is a locally trivial fibration with fiber $\mathcal{F}_{k-j-1}(\mathbb{C}
^n\setminus\{0,e_1,\ldots,e_{j}\}).$
\end{lemma}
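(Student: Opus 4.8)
The plan is to exhibit an explicit local trivialization over a suitable open cover of the base $\mathcal{F}_{j+1}^{j,n}$, mimicking the construction already used in Lemma \ref{l.7}. First I would observe that the fiber identification is essentially a normalization: given $(x_1,\ldots,x_{j+1})\in\mathcal{F}_{j+1}^{j,n}$, these $j+1$ points are affinely independent and span a $j$-dimensional affine subspace, so there is a unique affine isomorphism of $\mc^n$ sending them to the standard configuration $(0,e_1,\ldots,e_j)$ (extended by fixing a complementary frame); the remaining $k-j-1$ points then range over $\mc^n$ subject only to being distinct from each other and from $0,e_1,\ldots,e_j$, i.e. over $\mathcal{F}_{k-j-1}(\mc^n\setminus\{0,e_1,\ldots,e_j\})$. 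This is exactly the fiber over the basepoint, so the fiber claim is correct and the only real work is local triviality.

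Next I would set up the cover. Fix a basepoint $(x_1^0,\ldots,x_{j+1}^0)\in\mathcal{F}_{j+1}^{j,n}$ and, as in Lemma \ref{l.7}, choose additional points $x_{j+2}^0,\ldots,x_{n+1}^0$ so that $<x_1^0,\ldots,x_{n+1}^0>=\mc^n$. Define the open neighborhood
$$
\mathcal{U}=\{(x_1,\ldots,x_{j+1})\in\mathcal{F}_{j+1}^{j,n}\ \big|\ <x_1,\ldots,x_{j+1},x_{j+2}^0,\ldots,x_{n+1}^0>=\mc^n\}.
$$
For $(x_1,\ldots,x_{j+1})\in\mathcal{U}$ there is a unique affine automorphism $T_{(x_1,\ldots,x_{j+1})}$ of $\mc^n$, depending continuously (indeed algebraically) on the point, with $T(x_i^0)=x_i$ for $i\le j+1$ and $T(x_i^0)=x_i^0$ for $i\ge j+2$. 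Then the maps
$$
\varphi(x_1,\ldots,x_k)=\big((x_1,\ldots,x_{j+1}),\,T_{(x_1,\ldots,x_{j+1})}^{-1}(x_{j+2}),\ldots,T_{(x_1,\ldots,x_{j+1})}^{-1}(x_k)\big)
$$
and its inverse $\psi$ give a homeomorphism $P_A^{-1}(\mathcal{U})\cong\mathcal{U}\times\mathcal{F}_{k-j-1}(\mc^n\setminus\{x_1^0,\ldots,x_{j+1}^0\})$ over $\mathcal{U}$, and $\mathcal{F}_{k-j-1}(\mc^n\setminus\{x_1^0,\ldots,x_{j+1}^0\})\cong\mathcal{F}_{k-j-1}(\mc^n\setminus\{0,e_1,\ldots,e_j\})$ after an affine change of coordinates. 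One must check that $\varphi$ indeed lands in this product: since $T$ is an affine isomorphism fixing the affine span of $x_1,\ldots,x_{j+1}$ setwise (it maps $<x_i^0>$ to $<x_i>$), the images $T^{-1}(x_\ell)$ for $\ell\ge j+2$ are distinct and avoid $\{x_1^0,\ldots,x_{j+1}^0\}$ precisely because the original $x_\ell$ are distinct and avoid $<x_1,\ldots,x_{j+1}>\supseteq\{x_1,\ldots,x_{j+1}\}$; conversely one needs that an arbitrary distinct tuple in the complement, when pushed forward by $T$, still spans a $j$-dimensional space on the first $j+1$ coordinates and is globally distinct, which is immediate since $T$ is injective and affine.

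The main obstacle is the continuous (and well-defined) dependence of $T_{(x_1,\ldots,x_{j+1})}$ on the base point, together with verifying that $\mathcal{U}$ is genuinely open and that the sets $\mathcal{U}$ cover $\mathcal{F}_{j+1}^{j,n}$ as the auxiliary frame varies — but both are handled exactly as in Lemma \ref{l.7}: the condition defining $\mathcal{U}$ is the non-vanishing of a determinant, hence open and, for a generic choice of $x_{j+2}^0,\ldots,x_{n+1}^0$ relative to a given point, satisfied; and $T$ is obtained by solving a linear system whose coefficient matrix is invertible exactly on $\mathcal{U}$, so Cramer's rule gives the algebraic dependence. I would conclude by drawing the same commutative triangle with $pr_1\circ\varphi=P_A$ as in the previous lemma.
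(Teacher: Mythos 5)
Your proof is correct and follows essentially the same route as the paper: the same neighborhood $\mathcal{U}$ cut out by requiring the $j+1$ points together with a fixed auxiliary frame $x_{j+2}^0,\ldots,x_{n+1}^0$ to span $\mc^n$, the same uniquely determined affine isomorphism depending continuously on the base point, and the same trivialization $\varphi$ with $pr_1\circ\varphi=P_A$. (One small slip in your verification: the points $x_\ell$ for $\ell\geq j+2$ need not avoid the affine span $<x_1,\ldots,x_{j+1}>$, only the points $x_1,\ldots,x_{j+1}$ themselves --- but that weaker fact is all your argument actually uses, so nothing breaks.)
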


\begin{proof}
Fix $( x_1,\ldots,x_{j+1})\in \mathcal{F}_{j+1}^{j,n}$ and
choose $z_{j+2},\ldots,z_{n+1}\in \mathbb{C}^n$ such that
$<x_1,\ldots,x_{j+1},z_{j+2},\ldots,z_{n+1}>=\mathbb{C}^n$.\\
Define the neighborhood $\mathcal{U}$ of
$(x_1,\ldots,x_{j+1})$ by $$\mathcal{U}=\{(y_1,\ldots,y_{j+1})\in \mathcal{F}_{j+1}^{j,n}|
\,<y_1,\ldots,y_{j+1},z_{j+2},\ldots,z_{n+1}>=\mathbb{C}^n\} \,\,\, .$$ 
There exists a unique affine isomorphism $F_y:\mathbb{C}^n\rightarrow \mathbb{C}^n$, which depends continuously on $y=(y_{1},\ldots,y_{j+1})$, such that
\begin{equation*}
\begin{array}{ll}
&F_y(x_i)= y_i  \mbox{,} \,\,\,\,i=1,\ldots,j+1 \\
&F_y(z_i)=z_i \hbox{,}\,\,\,\,i=j+2,\ldots,n+1%
\end{array}
\end{equation*}
and this gives a local trivialization
$$
f:P_{A}^{-1}(\mathcal{U})\rightarrow \mathcal{U}\times \mathcal{F}_{k-j-1}(
\mathbb{C}^n\setminus\{x_1,\ldots,x_{j+1}\})
$$
$$
(y_1,\ldots,y_{k})\mapsto\big((y_1,\ldots,y_{j+1}),F_{y}^{-1}(y_{j+2}),
\ldots,F_y^{-1}(y_k)\big)
$$

which satisfies $pr_1\circ f=P_A$ .
\end{proof}

Let us remark that $P_A$ is the identity map if $k=j+1$ and the fibration is (globally) trivial  if $j=n$ since $\mathcal{U}=\mathcal{F}_{n+1}^{n,n}$; in this last case
$\pi_1(\mathcal{F}_{k}^{(A),n})=\mz$ (recall that we are considering $n > 1$).\\
Let  $\mathcal{A}=(A_1,\ldots,A_p)$ be a $p$-uple of subsets of cardinalities $|A_j|=d_j+1$, $j=1,\ldots,p$. For any given integer $h\in\{1,\ldots,k\}$, we define a new $p$-uple $\mathcal{A}^{\prime }=(A^{\prime}_1,\ldots,A^{\prime }_p)$ and integers $d^{\prime }_1,\ldots,d^{\prime }_p$ as follows:
$$
A^{\prime }_j=\left\{
\begin{array}{ll}
A_j,\,\,\hbox{if}\,\,h\notin A_j &  \\
A_j\setminus\{h\},\,\,\hbox{if}\,\,h\in A_j &
\end{array}
\right. ,\,\,\,\,d^{\prime }_j=\left\{
\begin{array}{ll}
d_j,\,\,\hbox{if}\,\,h\notin A_j &  \\
d_j-1,\,\,\hbox{if}\,\,h\in A_j &
\end{array}
\right..
$$

The following Lemma holds.

\begin{lemma}
\label{l.2.9} The map
$$
p_h:\mathcal{F}_{k}^{\mathcal{A},n}\rightarrow\mathcal{F}_{k-1}^{\mathcal{A}%
^{\prime },n},\,\, (x_1,\ldots,x_k)\mapsto(x_1,\ldots,\widehat{x_h}%
,\ldots,x_k)
$$
has local sections with path-connected fibers.
\end{lemma}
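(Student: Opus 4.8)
The plan is to produce, for each point $(x_1,\ldots,x_k)\in\mathcal{F}_{k-1}^{\mathcal{A}',n}$ (thought of as lying in the image, so really a tuple with the $h$-th entry removed) a local section of $p_h$ near it, and to identify the fiber $p_h^{-1}(x_1,\ldots,\widehat{x_h},\ldots,x_k)$ explicitly as an open subset of $\mc^n$ obtained by deleting a finite union of proper affine subspaces, which is path-connected since $n>1$.

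First I would describe the fiber. Given a point $(x_1,\ldots,\widehat{x_h},\ldots,x_k)$ of $\mathcal{F}_{k-1}^{\mathcal{A}',n}$, the fiber over it consists of all $y\in\mc^n$ such that, inserting $y$ in position $h$, all the defining dimension conditions $\dim\langle x_i\rangle_{i\in A_j}=d_j$ continue to hold and $y\neq x_i$ for $i\neq h$. For the indices $j$ with $h\notin A_j$ there is no new condition. For the indices $j$ with $h\in A_j$, the condition $d'_j=d_j-1$ already holds by hypothesis for the remaining points $\{x_i\}_{i\in A_j\setminus\{h\}}$, and adding $y$ must raise the dimension by exactly one, i.e.\ $y$ must \emph{not} lie in the affine span $W_j:=\langle x_i\rangle_{i\in A_j\setminus\{h\}}$, a subspace of dimension $d'_j=d_j-1\le n-1$, hence proper. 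Also $y$ must avoid the finitely many points $x_i$, $i\neq h$. Therefore the fiber is $\mc^n$ minus a finite union of proper affine subspaces (the $W_j$ for $j$ with $h\in A_j$, together with points). Since the (real) codimension of each removed piece is at least $2$, this complement is path-connected (indeed connected) whenever $n>1$; this gives the path-connectedness of the fibers.

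Next I would build the local section. Fix $q=(x_1,\ldots,\widehat{x_h},\ldots,x_k)$ in the image. Since the fiber over $q$ is nonempty, pick a point $y_0\in\mc^n$ lying in $p_h^{-1}(q)$, i.e.\ satisfying all the conditions above. I want a neighborhood $\mathcal{V}$ of $q$ in $\mathcal{F}_{k-1}^{\mathcal{A}',n}$ and a continuous map $s:\mathcal{V}\to\mathcal{F}_k^{\mathcal{A},n}$ with $p_h\circ s=\mathrm{id}$. The naive choice $s(x_1,\ldots,\widehat{x_h},\ldots,x_k)=(x_1,\ldots,y_0,\ldots,x_k)$ need not land in $\mathcal{F}_k^{\mathcal{A},n}$, because as the $x_i$ vary the span $W_j(\underline x)=\langle x_i\rangle_{i\in A_j\setminus\{h\}}$ moves and $y_0$ might fall into it (or onto some $x_i$). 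However, for $\underline x$ close enough to $q$, the affine subspaces $W_j(\underline x)$ and the points $x_i$ depend continuously on $\underline x$ (the dimensions $d'_j$ being locally constant on $\mathcal{F}_{k-1}^{\mathcal{A}',n}$), so one can choose $y_0(\underline x)$ continuously avoiding all of them: e.g.\ take an affine-linear family of candidate points through $y_0$ and use that the "bad" set is a proper analytic subset depending continuously on $\underline x$, so a continuous selection avoiding it exists on a small enough $\mathcal{V}$. (Equivalently, one may invoke the local triviality already established in Lemma \ref{l.6.5}-type arguments: over a neighborhood where one has fixed auxiliary points completing a frame, transport $y_0$ by the affine isomorphism $F$, which keeps it off the transported subspaces.) Defining $s(\underline x)$ to insert this $y_0(\underline x)$ in slot $h$ gives the desired local section, and $p_h\circ s=\mathrm{id}$ by construction.

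The main obstacle is the construction of the continuous avoidance selection $y_0(\underline x)$: one must check that the conditions cutting out the fiber vary "nicely" (lower-semicontinuously, with the relevant dimensions locally constant) as $\underline x$ moves in $\mathcal{F}_{k-1}^{\mathcal{A}',n}$, so that a point avoiding the bad locus over $q$ still avoids it over all nearby $\underline x$, after a small perturbation if necessary. Once local sections with path-connected fibers are in hand, the statement follows; these two properties are exactly what is needed later to run the homotopy exact sequence / Van Kampen argument on the cover of $\mathcal{F}_k^{n,n}$.
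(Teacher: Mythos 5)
Your identification of the fiber agrees with the paper's: over $(x_1,\ldots,\widehat{x_h},\ldots,x_k)$ it is $\mathbb{C}^n$ with the points $x_i$ and the spans $L'_j=\langle x_i\rangle_{i\in A_j\setminus\{h\}}$ (for the $j$ with $h\in A_j$) removed; each $L'_j$ has dimension $d_j-1\le n-1$, so the fiber is a nonempty, path-connected complement of finitely many proper affine subspaces. That half of your argument is the paper's.

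The soft spot is the local section. You correctly note that a globally constant choice $y_0$ can fail, but you then discard the constant section altogether and replace it by a ``continuous selection $y_0(\cdot)$ avoiding a continuously varying proper analytic subset,'' whose existence is asserted rather than proved; and your parenthetical fallback (transport $y_0$ by the affine isomorphism $F$ of a Lemma~\ref{l.6.5}-type trivialization) fails in general, since an affine automorphism of $\mathbb{C}^n$ is determined by $n+1$ points and so cannot carry all $k-1$ base points to their perturbed positions once $k-1>n+1$. The observation you are missing --- and the paper's actual proof --- is that the constant section does work \emph{locally}: on $\mathcal{F}_{k-1}^{\mathcal{A}',n}$ the points indexed by $A'_j$ remain affinely independent, so $(y_1,\ldots,y_{k-1})\mapsto\langle y_i\rangle_{i\in A'_j}$ is a continuous map to $Graff^{d'_j}(\mathbb{C}^n)$, and the condition ``$y_0\notin L$'' is open in $L$; intersecting these finitely many open conditions with ``$y_0\ne y_i$ for all $i$'' yields an open neighborhood of the base point on which $(y_1,\ldots,y_{k-1})\mapsto(y_1,\ldots,y_0,\ldots,y_{k-1})$ (with $y_0$ inserted in slot $h$) lands in $\mathcal{F}_k^{\mathcal{A},n}$. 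Only local sections are claimed, so openness is all that is needed; with that one-line repair your proof coincides with the paper's.
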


\begin{proof}
Let us suppose that $h=k$ and $k\in(A_1\cap\ldots\cap A_l)\setminus(
A_{l+1}\cup\ldots \cup A_p)$. Then the fiber of the map $p_k:\mathcal{F}
_{k}^{\mathcal{A},n}\rightarrow\mathcal{F}_{k-1}^{\mathcal{A}^{\prime },n}$
is
$$
p_{k}^{-1}(x_1,\ldots,x_{k-1})\approx \mathbb{C}^n\setminus \big(%
L^{\prime }_{1}\cup\ldots\cup L^{\prime }_{l}\cup\{x_1,\ldots,x_{k-1}\}\big)
$$
where $L^{\prime }_{j}=<x_i>_{i\in A^{\prime }_j}$. Even in the
case when $\dim L_j=n$, we have $\dim L^{\prime }_j<n$, hence the fiber is
path-connected and nonempty. Fix a base point $x=(x_1,\ldots,x_{k-1})\in
\mathcal{F}_{k-1}^{\mathcal{A}^{\prime },n}$ and choose $x_k\in\mathbb{C}^n\setminus(L^{\prime }_1\cup\ldots \cup L^{\prime
}_l\cup\{x_1,\ldots,x_{k-1}\})$. There are neighborhoods $W_j\subset Graff^{d^{\prime }_j}(\mathbb{C}^n)$ of $L^{\prime
}_j\,(j=1,\ldots,l)$ such that $x_k\notin L^{\prime \prime }_j$ if $%
L^{\prime \prime }_j\in W_j$; we take a constant local section
$$
s:W=g^{-1}\big((\mathbb{C}^n\setminus\{x_k\})^{k-1}\times\mathop{\prod}%
\limits_{i=1}^{l}W_i\big)\rightarrow \mathcal{F}_{k}^{\mathcal{A},n}
$$
$$
(y_1,\ldots,y_{k-1})\mapsto(y_1,\ldots,y_{k-1},x_k),
$$
where the continuous map $g$ is given by:
$$
g:\mathcal{F}_{k-1}^{\mathcal{A}^{\prime },n}\rightarrow(\mathbb{C}^n)^{k-1}\times Graff^{d^{\prime }_1}(\mathbb{C}^n)\times\ldots\times
Graff^{d^{\prime }_l}(\mathbb{C}^n)
$$
$$
(y_1,\ldots,y_{k-1})\mapsto(y_1,\ldots,y_{k-1},L^{\prime \prime
}_{1},\ldots,L^{\prime \prime }_l),
$$
and $L^{\prime \prime }_j=<y_i>_{i\in A^{\prime }_{j}}$ for $%
j=1,\ldots,l$.
\end{proof}




\begin{proposition}
\label{l.5} The space $\mathcal{F}_{k}^{\mathcal{A},n}$ is path-connected.
\end{proposition}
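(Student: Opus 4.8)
The plan is to argue by induction on the number of points $k$, with Lemma~\ref{l.2.9} providing the inductive step. For $k=1$ every set $A_j$ is contained in $\{1\}$, so all the defining conditions reduce to $\dim\langle x_1\rangle=0$ (or are vacuous), whence $\mathcal{F}_1^{\mathcal{A},n}=\mc^n$, which is path-connected; this is the base case. The only topological input I need besides Lemma~\ref{l.2.9} is the following elementary fact: \emph{if $f\colon E\to B$ is a continuous surjection onto a path-connected space, $f$ admits a continuous section on a neighborhood of each point of $B$, and every fiber $f^{-1}(b)$ is path-connected, then $E$ is path-connected.}

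For the inductive step, assume $\mathcal{F}_{k-1}^{\mathcal{B},n}$ is path-connected for every $(k-1)$-uple $\mathcal{B}$ of subsets of $\{1,\ldots,k-1\}$, and let $\mathcal{A}=(A_1,\ldots,A_p)$ be given on $\{1,\ldots,k\}$. Apply Lemma~\ref{l.2.9} with $h=k$: the map $p_k\colon\mathcal{F}_k^{\mathcal{A},n}\to\mathcal{F}_{k-1}^{\mathcal{A}^{\prime},n}$ has local sections and path-connected, nonempty fibers, and $\mathcal{A}^{\prime}=(A^{\prime}_1,\ldots,A^{\prime}_p)$ is again a legitimate datum (satisfying $d^{\prime}_j=|A^{\prime}_j|-1$), now on $\{1,\ldots,k-1\}$. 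Since a local section exists over a neighborhood of every point, $p_k$ is surjective, and $\mathcal{F}_{k-1}^{\mathcal{A}^{\prime},n}$ is path-connected by the inductive hypothesis (if it happens to be empty then $\mathcal{F}_k^{\mathcal{A},n}$ is empty too and there is nothing to prove). The elementary fact above then gives that $\mathcal{F}_k^{\mathcal{A},n}$ is path-connected, completing the induction.

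It remains to justify the elementary fact. Given $e,e'\in E$, pick a path $\gamma\colon[0,1]\to B$ from $f(e)$ to $f(e')$; by compactness choose $0=t_0<\cdots<t_m=1$ and local sections $s_1,\ldots,s_m$ with $\gamma([t_{i-1},t_i])$ inside the domain of $s_i$, so that $s_i\circ\gamma|_{[t_{i-1},t_i]}$ is a path in $E$ lying over $\gamma$; the endpoints $s_i(\gamma(t_i))$ and $s_{i+1}(\gamma(t_i))$ lie in the common fiber $f^{-1}(\gamma(t_i))$, which is path-connected, so they can be joined there. Concatenating all these arcs and fiber paths produces a path in $E$ from $s_1(f(e))$ to $s_m(f(e'))$, and one finishes by joining $e$ to $s_1(f(e))$ inside $f^{-1}(f(e))$ and $s_m(f(e'))$ to $e'$ inside $f^{-1}(f(e'))$. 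This part is entirely routine; in fact all the genuine work — identifying the fibers as complements of proper affine subspaces and finitely many points in $\mc^n$ and writing down the (constant) local sections — has already been carried out in the proof of Lemma~\ref{l.2.9}, so I do not expect a real obstacle here. The only mild point to watch is that one must track the reduced datum $\mathcal{A}^{\prime}$ at each stage and be content with local sections rather than an honest local triviality, which is exactly why Lemma~\ref{l.2.9} was formulated in that weaker form.
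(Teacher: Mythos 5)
Your proof is correct, but it follows a genuinely different inductive scheme from the paper's. The paper argues by induction on $p$ and on $d_1+\cdots+d_p$: the base case $p=1$ is handled by the fibration $P_A$ of Lemma~\ref{l.6.5} over the path-connected base $\mathcal{F}_{j+1}^{j,n}$; the inductive step deletes a point of $A_p\setminus A_1$ (when $d_p\geq 3$) via Lemma~\ref{l.2.9}; and the degenerate cases ($A_p\subseteq A_1$ or $d_p$ small) are absorbed by the reductions (3) and (4) of Example~\ref{ex:2.6}. You instead induct on $k$ alone, always deleting the last point, so that Lemma~\ref{l.2.9} --- together with the standard ``surjection with local sections and path-connected fibers over a path-connected base'' lemma, which the paper quotes from \cite{BS} and you reprove correctly --- is the only tool needed; the base case $k=1$ is trivial, and neither Lemma~\ref{l.6.5} nor Example~\ref{ex:2.6} is invoked. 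This is a real simplification: the reduction $\mathcal{A}\mapsto\mathcal{A}'$ stays within the same class of data, the fibers of $p_k$ are always complements in $\mc^n$ of finitely many proper affine subspaces and points (hence non-empty and path-connected whenever all $d_j\leq n$, the only case in which the statement has content), and no case analysis is required. Two small points to tidy up: the inductive hypothesis should be quantified over all finite sequences $\mathcal{B}$ of subsets of $\{1,\ldots,k-1\}$, not over $(k-1)$-uples, since $\mathcal{A}'$ has the same length $p$ as $\mathcal{A}$; and, exactly as in the paper, you are implicitly assuming $d_j\leq n$ for every $j$ --- otherwise both $\mathcal{F}_{k}^{\mathcal{A},n}$ and the relevant fibers may be empty, in which case the claim is vacuous, as you note.
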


\begin{proof}
Use induction on $p$ and $d_1+d_2+\ldots+d_p$. If $p=1
$, use Lemma \ref{l.6.5} and the space $\mathcal{F}_{j+1}^{j,n}$ which is
path-connected. If $A_p$ is not included in $A_1$ and $d_p\geq3$, delete a
point in $A_p\setminus A_1$ and use Lemma \ref{l.2.9} and the fact that if $C$
is not empty and path-connected and $p:B\rightarrow C$ is a surjective continuous map with local sections such that $p^{-1}(y)$ is path-connected for all $y\in C$, then $B$ is path-connected (see \cite{BS}). If $%
A_p\subset A_1$ or $d_p\leq2$, use Example \ref{ex:2.6}, $(3)$ and $(4)$.
\end{proof}

Let $e_1,\ldots,e_n$ be the canonical basis of $\mc^n$ and 
$$M_h=\{(x_1,\ldots,x_h)\in\mathcal{F}_h(\mc^n\setminus\{0,e_1,\ldots,e_n\})|\ x_1\not\in<e_1,\ldots,e_n>\},$$
the following Lemma holds.
\begin{lemma}
The map $$
p_h:M_h\rightarrow(\mc^n)^*\setminus<e_1,\ldots,e_n>
$$ sending $(x_1,\ldots,x_h)\mapsto x_1$, is a locally trivial fibration with fiber\\
$\mathcal{F}_{h-1}(\mc^n\setminus\{0,e_1,\ldots,e_n,e_1+\cdots+e_n\})$.
\end{lemma}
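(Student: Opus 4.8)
The plan is to exhibit an explicit local trivialization exactly in the style of Lemma \ref{l.6.5} and Lemma \ref{l.2.9}, using the affine automorphisms of $\mc^n$ that fix the reference configuration $0,e_1,\ldots,e_n$ and move the first point. First I would fix a base point $x_1^0\in(\mc^n)^*\setminus<e_1,\ldots,e_n>$ and observe that the affine span $<x_1^0,e_1,\ldots,e_n>$ is all of $\mc^n$ (indeed already $<0,e_1,\ldots,e_n>=\mc^n$ since $x_1^0\neq 0$, but the point is that the $n+1$ points $x_1^0,0,e_1,\ldots,e_n$ are in general position modulo the hyperplane, so $x_1^0$ together with $e_1,\ldots,e_n$ affinely spans). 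For any $y_1$ in a neighborhood $\mathcal U$ of $x_1^0$ inside $(\mc^n)^*\setminus<e_1,\ldots,e_n>$, there is a unique affine isomorphism $G_{y_1}:\mc^n\to\mc^n$ depending continuously on $y_1$ with $G_{y_1}(x_1^0)=y_1$ and $G_{y_1}(e_i)=e_i$ for $i=1,\ldots,n$; this forces $G_{y_1}(0)=0$ as well, since $0=\frac{1}{?}$... more cleanly, $G_{y_1}$ is the affine map that is the identity on the affine hyperplane through $e_1,\ldots,e_n$ only if $0$ lies on it, which it does not, so one just checks directly that fixing $e_1,\ldots,e_n$ and sending $x_1^0\mapsto y_1$ determines $G_{y_1}$ uniquely and that it automatically fixes $0$ because $0$ is the unique point of $\mc^n$ with prescribed barycentric-type coordinates relative to the frame. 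I would then verify $G_{y_1}(e_1+\cdots+e_n)=e_1+\cdots+e_n$ for the same reason (it is a fixed affine combination of $e_1,\ldots,e_n$), which is exactly what makes the fiber description come out right.

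Granting this, the trivialization is
$$
f:p_h^{-1}(\mathcal U)\longrightarrow \mathcal U\times \mathcal F_{h-1}\big(\mc^n\setminus\{0,e_1,\ldots,e_n,e_1+\cdots+e_n\}\big),
$$
$$
(x_1,x_2,\ldots,x_h)\longmapsto\big(x_1,\,G_{x_1}^{-1}(x_2),\ldots,G_{x_1}^{-1}(x_h)\big),
$$
with inverse $(y_1,\,w_2,\ldots,w_h)\mapsto(y_1,G_{y_1}(w_2),\ldots,G_{y_1}(w_h))$, and one checks $pr_1\circ f=p_h$. The key point to verify is that the fiber $p_h^{-1}(x_1^0)$ is correctly identified: a point $(x_1^0,x_2,\ldots,x_h)$ lies in $M_h$ iff the $x_j$ ($j\ge 2$) are distinct, avoid $0,e_1,\ldots,e_n$, and avoid $x_1^0$; pulling back by $G_{x_1^0}=\mathrm{id}$ this says the $w_j:=x_j$ avoid $0,e_1,\ldots,e_n$ and $x_1^0$. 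That is not literally $\mc^n\setminus\{0,e_1,\ldots,e_n,e_1+\cdots+e_n\}$ unless one chooses the base point $x_1^0=e_1+\cdots+e_n$ — which is legitimate since $e_1+\cdots+e_n\notin<e_1,\ldots,e_n>$ (the affine hyperplane $<e_1,\ldots,e_n>$ consists of points whose coordinates sum to $1$, and $n\neq 1$ here, so $e_1+\cdots+e_n$ is not on it), and local triviality only needs the model fiber to be the fiber over some chosen base point of each trivializing chart after transporting by $G$. More precisely, for a general chart $\mathcal U$ with a general base point $x_1^0$, the fiber over $y_1\in\mathcal U$ is $\mc^n\setminus\{0,e_1,\ldots,e_n,y_1\}$, and $G_{y_1}^{-1}$ carries it homeomorphically to $\mc^n\setminus\{0,e_1,\ldots,e_n,x_1^0\}$; to get a single model one covers $(\mc^n)^*\setminus<e_1,\ldots,e_n>$ by charts and uses that this space is connected, so all model fibers are homeomorphic, and picking the representative $x_1^0=e_1+\cdots+e_n$ gives the stated fiber. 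I would phrase the final trivialization relative to that choice, so the fiber literally reads $\mathcal F_{h-1}(\mc^n\setminus\{0,e_1,\ldots,e_n,e_1+\cdots+e_n\})$.

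The main obstacle is the bookkeeping around the affine automorphism group: one must be careful that $G_{y_1}$ really is uniquely determined and continuous in $y_1$, and — the genuinely delicate part — that it automatically fixes $0$ and $e_1+\cdots+e_n$. The clean way to see both is to note that $e_1-e_n, e_2-e_n,\ldots,e_{n-1}-e_n$ together with $e_n$ and $y_1$ form, for $y_1$ near $x_1^0$, an affine frame of $\mc^n$ (equivalently $y_1\notin<e_1,\ldots,e_n>$, which is exactly the hypothesis $y_1\in(\mc^n)^*\setminus<e_1,\ldots,e_n>$ — wait, one also needs $y_1\neq$ affine combinations, but generically this holds on an open set, and shrinking $\mathcal U$ if necessary ensures it); an affine isomorphism is determined by its values on an affine frame, so fixing $e_1,\ldots,e_n$ and the value at $x_1^0$ over-determines nothing and determines $G_{y_1}$, and since $0$ and $e_1+\cdots+e_n$ are fixed affine combinations of $e_1,\ldots,e_n$ (namely with coefficient sums $0$ and $n$ respectively — an affine map respects all affine combinations, not just convex ones), they are automatically fixed. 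Continuity in $y_1$ is then immediate from Cramer's rule. Everything else — that $f$ is a homeomorphism with the stated inverse and that $pr_1\circ f=p_h$ — is a direct computation identical to the ones in Lemmas \ref{l.6.5} and \ref{l.2.9}.
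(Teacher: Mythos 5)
There is a genuine gap at the heart of your construction: the affine isomorphism $G_{y_1}$ determined by $G_{y_1}(e_i)=e_i$ ($i=1,\ldots,n$) and $G_{y_1}(x_1^0)=y_1$ does \emph{not} automatically fix $0$, nor $e_1+\cdots+e_n$. Your justification --- that these are ``fixed affine combinations of $e_1,\ldots,e_n$ with coefficient sums $0$ and $n$'' --- misuses the defining property of affine maps: an affine map commutes only with combinations whose coefficients sum to $1$, and neither $0$ (sum $0$) nor $e_1+\cdots+e_n$ (sum $n\geq 2$) is such a combination of $e_1,\ldots,e_n$. Concretely, writing $0=\sum_{i=1}^n\lambda_ie_i+\lambda_0x_1^0$ in barycentric coordinates with respect to the affine frame $(e_1,\ldots,e_n,x_1^0)$, one finds $\lambda_0=1/(1-\sum_ia_i)\neq0$, where $x_1^0=(a_1,\ldots,a_n)$ and $\sum_ia_i\neq1$ is exactly the hypothesis $x_1^0\notin<e_1,\ldots,e_n>$; hence $G_{y_1}(0)=\lambda_0(y_1-x_1^0)\neq0$ whenever $y_1\neq x_1^0$. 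Consequently $G_{y_1}^{-1}$ carries the fiber $\mc^n\setminus\{0,e_1,\ldots,e_n,y_1\}$ onto $\mc^n\setminus\{G_{y_1}^{-1}(0),e_1,\ldots,e_n,x_1^0\}$ with $G_{y_1}^{-1}(0)$ varying with $y_1$, so your map $f$ does not land in $\mathcal{U}\times\mathcal{F}_{h-1}(\mc^n\setminus\{0,e_1,\ldots,e_n,x_1^0\})$ and is not a trivialization.

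This cannot be repaired within the affine category: an affine isomorphism fixing the $n+1$ affinely independent points $0,e_1,\ldots,e_n$ is forced to be the identity, so no affine map can simultaneously fix all the punctures and move $x_1^0$ to $y_1\neq x_1^0$. This is precisely why the paper departs here from the pattern of Lemmas \ref{l.6.5} and \ref{l.2.9} (where the points being fixed and the point being moved still fit inside a single affine frame) and instead constructs, for each $x$ in a small ball $B$ centered at $\bar x$ and contained in $(\mc^n)^*\setminus<e_1,\ldots,e_n>$, a \emph{non-affine} homeomorphism $F_x$ of $\mc^n$ that sends $x$ to $\bar x$, is the identity outside $B$ (hence fixes $0,e_1,\ldots,e_n$, all of which lie outside $B$), and depends continuously on $x$; this is done by transporting a compactly supported translation-like homeomorphism of the open ball. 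The peripheral parts of your argument (the identification of the fiber over $\bar x$ as $\mathcal{F}_{h-1}(\mc^n\setminus\{0,e_1,\ldots,e_n,\bar x\})$, and the normalization of $\bar x$ to $e_1+\cdots+e_n$ by a homeomorphism fixing $0,e_1,\ldots,e_n$) are sound, but they rest on the broken trivialization.
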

\begin{proof}
Let $G:B^m\to\mr^m$ be the homeomorphism from the open unit $m$-ball to $\mr^m$ given by
$G(x)=\frac{x}{1-|x|}$, (whose inverse is the map $G^{-1}(y)=\frac{y}{1+|y|}$).
For $x\in B^m$ let $\tilde G_x=G^{-1}\circ\tau_{-G(x)}\circ G$ be an homeomorphism of $B^m$, where $\tau_v:\mr^n\to\mr^n$ is the translation by $v$.
$\tilde G_x$ sends $x$ to 0 and can be extended to a homeomorphism of the closure $\overline{B^m}$, by requiring it to be the identity on the $m-1$-sphere
(the exact formula for $\tilde G_x(y)$ is $\frac{(1-|x|)y-(1-|y|)x}{(1-|x|)(1-|y|)+|(1-|x|)y-(1-|y|)x|}$).\\
 We can further extend it to an homomorphism $G_x$ of $\mr^m$ by setting $G_{x}(y)=y$ if $|y|>1$. Notice that $G_x$ depends continuously on $x$.\\
Let $\bar x\in(\mc^n)^*\setminus<e_1,\ldots,e_n>$, fix an open complex ball $B$ in \\$(\mc^n)^*\setminus<e_1,\ldots,e_n>$ centered at $\bar x$ and an affine isomorphism $H$ of $\mc^n$ sending $B$ to the open real $2n$-ball $B^{2n}$. For $x\in B$, define the homeomorphism $F_x$ of $\mc^n$ $F_x=H^{-1}\circ G_{H(x)}\circ H$ which sends $x$ to $\bar x$, is the identity outside of $B$ and depends continuously on $x$.
The result follows from the continuous map
$$
F:p_h^{-1}(B)\to B\times p_h^{-1}(\bar x)
$$
$$
F(x,x_2,\ldots,x_h)=(x,(\bar x,F_x(x_2),\ldots,F_x(x_h)))
$$
(whose inverse is the map $F^{-1}:B\times p_h^{-1}(\bar x)\to p_h^{-1}(B)$,
$F^{-1}(x,(\bar x,x_2,\ldots,x_h))=(x,F^{-1}_x(x_2),\ldots,F^{-1}_x(x_h))$).\\
The fiber $p_h^{-1}(\bar x)$ is homeomorphic to $\mathcal{F}_{h-1}(\mc^n\setminus\{0,e_1,\ldots,e_n,e_1+\cdots+e_n\})$ via an homeomorphism of $\mc^n$ which fixes $0,e_1,\ldots,e_n$ and sends $\bar x$ to the sum $e_1+\ldots+e_n$.
\end{proof}

Thus we have, since $n \geq 2$, $\pi_1(M_h)=\mz$, and we can choose as ge\-ne\-rator the map
$S^1\rightarrow M_h$ sending $z\mapsto (z(e_1+\cdots+e_n),x_2,\ldots,x_h)$ with $x_2,\ldots,x_h$ of sufficient high norm (i.e. a loop that goes round the hyperplane $<e_1,\ldots,e_n>$).

\begin{lemma}
\label{l.6.5bis} For $A=\{1,\ldots,n+1\}$, $B=\{2,\ldots,n+2\}$, and $k>n+1$ the map
$$
P_{A,B}:\mathcal{F}_{k}^{(A,B),n}\rightarrow\mathcal{F}_{n+1}^{n,n},\,\,\,(x_1,%
\dots,x_k)\mapsto(x_1,\ldots,x_{n+1})
$$
is a trivial fibration with fiber $M_{k-n-1}$
\end{lemma}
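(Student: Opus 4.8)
The plan is to produce an explicit global trivialization, in the spirit of the proofs of Lemmas \ref{l.7} and \ref{l.6.5}, using the fact that the base $\mathcal{F}_{n+1}^{n,n}$ is exactly the space of ordered affine bases of $\mc^n$. First I would describe the fiber. Over $(x_1,\ldots,x_{n+1})\in\mathcal{F}_{n+1}^{n,n}$ the preimage $P_{A,B}^{-1}(x_1,\ldots,x_{n+1})$ consists of the tuples $(x_{n+2},\ldots,x_k)$ of points of $\mc^n$ that are pairwise distinct, distinct from $x_1,\ldots,x_{n+1}$, and with $\dim\langle x_2,\ldots,x_{n+2}\rangle=n$; the points $x_{n+3},\ldots,x_k$ carry no further constraint since the only members of $\mathcal{A}=(A,B)$ are $A=\{1,\ldots,n+1\}$ and $B=\{2,\ldots,n+2\}$. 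As $x_1,\ldots,x_{n+1}$ is an affine basis, $H:=\langle x_2,\ldots,x_{n+1}\rangle$ is a hyperplane and the condition $\dim\langle x_2,\ldots,x_{n+2}\rangle=n$ is equivalent to $x_{n+2}\notin H$. Hence the fiber is
$$\{(x_{n+2},\ldots,x_k)\in\mathcal{F}_{k-n-1}(\mc^n\setminus\{x_1,\ldots,x_{n+1}\})\mid x_{n+2}\notin H\},$$
which is a copy of $M_{k-n-1}$ with the role of $\{0,e_1,\ldots,e_n\}$ played by $\{x_1,\ldots,x_{n+1}\}$ and the role of $\langle e_1,\ldots,e_n\rangle$ played by $H=\langle x_2,\ldots,x_{n+1}\rangle$, the span of the $n$ points other than the first.

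To globalize, for $(x_1,\ldots,x_{n+1})\in\mathcal{F}_{n+1}^{n,n}$ let $T=T_{(x_1,\ldots,x_{n+1})}$ be the unique affine automorphism of $\mc^n$ with $T(0)=x_1$ and $T(e_i)=x_{i+1}$ for $i=1,\ldots,n$; concretely $T(v)=x_1+Mv$ where $M$ has columns $x_2-x_1,\ldots,x_{n+1}-x_1$, so $T$ is invertible precisely because $\dim\langle x_1,\ldots,x_{n+1}\rangle=n$, and the entries of $T$ and $T^{-1}$ are rational in the coordinates of the $x_i$ with denominator $\det M\neq0$ on $\mathcal{F}_{n+1}^{n,n}$; hence $T$ and $T^{-1}$ depend continuously on the base point. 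I would then define
$$f:\mathcal{F}_{k}^{(A,B),n}\to\mathcal{F}_{n+1}^{n,n}\times M_{k-n-1},\qquad (x_1,\ldots,x_k)\mapsto\big((x_1,\ldots,x_{n+1}),(T^{-1}(x_{n+2}),\ldots,T^{-1}(x_k))\big).$$
Since $T$ carries $\{0,e_1,\ldots,e_n\}$ onto $\{x_1,\ldots,x_{n+1}\}$ and $\langle e_1,\ldots,e_n\rangle$ onto $H$, one gets $T^{-1}(x_j)\notin\{0,e_1,\ldots,e_n\}$ for $j\geq n+2$, the $T^{-1}(x_j)$ pairwise distinct, and $T^{-1}(x_{n+2})\notin\langle e_1,\ldots,e_n\rangle$ because $x_{n+2}\notin H$; so $f$ indeed lands in $\mathcal{F}_{n+1}^{n,n}\times M_{k-n-1}$.

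The inverse is $\big((x_1,\ldots,x_{n+1}),(y_1,\ldots,y_{k-n-1})\big)\mapsto(x_1,\ldots,x_{n+1},T(y_1),\ldots,T(y_{k-n-1}))$, and running the same bookkeeping backwards shows it takes values in $\mathcal{F}_{k}^{(A,B),n}$: distinctness is clear, $\dim\langle x_1,\ldots,x_{n+1}\rangle=n$ holds by hypothesis, and $\dim\langle x_2,\ldots,x_{n+2}\rangle=n$ because $T(y_1)\notin H$ as $y_1\notin\langle e_1,\ldots,e_n\rangle$. Both maps are continuous, mutually inverse, and satisfy $pr_1\circ f=P_{A,B}$, so $P_{A,B}$ is a globally trivial fibration with fiber $M_{k-n-1}$.

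The only delicate point — the one I would be most careful about — is matching the fiber constraints with the defining conditions of $M_{k-n-1}$: one must check that the distinguished hyperplane in $M_h$ is the span of $e_1,\ldots,e_n$ (the points other than $0$), so that under $T$ it corresponds to $H=\langle x_2,\ldots,x_{n+1}\rangle$ (the span of the points other than $x_1$) and not to some other hyperplane, and that only the first fiber coordinate $x_{n+2}$ (the one coming from $B$) is subject to this hyperplane condition. The existence, uniqueness and algebraic dependence of $T$, and the fact that $x_{n+3},\ldots,x_k$ are otherwise unconstrained, are routine and parallel the treatment of $P_A$ in Lemma \ref{l.6.5}.
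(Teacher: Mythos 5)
Your proposal is correct and follows essentially the same route as the paper: the paper also defines, for each affine basis $x=(x_1,\ldots,x_{n+1})$, the unique affine isomorphism $F_x$ with $F_x(0)=x_1$, $F_x(e_i)=x_{i+1}$, and uses $((x_1,\ldots,x_{n+1}),(x_{n+2},\ldots,x_k))\mapsto(x_1,\ldots,x_{n+1},F_x(x_{n+2}),\ldots,F_x(x_k))$ as a global trivialization. You merely spell out the fiber identification and the continuity of $T$ and $T^{-1}$ in more detail than the paper does.
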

\begin{proof}
For $x=(x_1,\ldots,x_{n+1})\in\mathcal{F}_{n+1}^{n,n}$ let $F_x$ be the affine isomorphism of $\mc^n$
such that $F_x(0)=x_1$, $F_x(e_i)=x_{i+1}$, for $i=1,\ldots,n$.  The map
$$
\mathcal{F}_{n+1}^{n,n}\times M_{k-n-1}\rightarrow\mathcal{F}_{k}^{(A,B),n}
$$
sending 
$$
((x_1,\ldots,x_{n+1}),(x_{n+2},\ldots,x_k))\mapsto (x_1,\ldots,x_{n+1},F_x(x_{n+2}),\ldots,F_x(x_k))
$$ gives the result.
\end{proof}

\subsection{Computation of the fundamental group}

From Lemma \ref{l.6.5bis} it follows that $\pi_1(\mathcal{F}_{k}^{(A,B),n})=\mz\times\mz$ and that it has two generators: $((z+1)(e_1+\ldots+e_n),e_1,\ldots,e_n,e_1+\ldots+e_n,x_{n+3},\ldots,x_k)$ and $(0,e_1,\ldots,e_n,z(e_1+\ldots+e_n),x_{n+3},\ldots,x_k)$, where $x_{n+3},\ldots,x_k$ are chosen \textit{far enough} to be different from the first $n+2$ points. 
The first generator is the one coming from the base, the second is the one from the fiber of the fibration $P_{A,B}$.\\
Note that using the map
$$
P'_{A,B}:\mathcal{F}_{k}^{(A,B),n}\rightarrow\mathcal{F}_{n+1}^{n,n},\,\,\,(x_1,%
\dots,x_k)\mapsto(x_2,\ldots,x_{n+2})
$$
we obtain the same result and the generator coming from the base here is the one coming from the fiber above and vice versa.\\
The map $P_{A,B}$ factors through the inclusion $i_A:\mathcal{F}_{k}^{(A,B),n}\hookrightarrow\mathcal{F}_{k}^{(A),n}$ followed by the map
$$
P_A:\mathcal{F}_{k}^{(A),n}\rightarrow\mathcal{F}_{n+1}^{n,n},\,\,\,(x_1,%
\dots,x_k)\mapsto(x_1,\ldots,x_{n+1})
$$ and we get the following commutative diagram of fundamental groups:

\begin{center}
\begin{picture}(360,120)
\thicklines
\put(70,90){\vector(2,-1){50}}
\put(107,100){\vector(1,0){70}}
\put(157,65){\vector(2,1){50}}
\put(45,98){$\pi_1(\mathcal{F}_{k}^{(A,B),n})$}
\put(184,98){$\pi_1(\mathcal{F}_{n+1}^{n,n})$}
\put(112,53){$\pi_1(\mathcal{F}_{k}^{(A),n}) \hspace{5cm} .$}
\put(85,70){${i_A}_*$}
\put(190,70){${P_{A}}_*$}
\put(134,105){${P_{A,B}}_*$}
\end{picture}
\end{center}
\vspace{-1.7cm}

Since $P_A$ induces an isomorphism on the fundamental groups, this means that ${i_A}_*$ sends the generator of $\pi_1(\mathcal{F}_{k}^{(A,B),n})$ coming from the fiber to 0 in $\pi_1(\mathcal{F}_{n+1}^{n,n})$. That is, the generator of $\pi_1(\mathcal{F}_{k}^{(B),n})$ (which is homotopically equi\-va\-lent to the generator of $\pi_1(\mathcal{F}_{k}^{(A,B),n})$ coming from the fiber) is trivial in $\pi_1(\mathcal{F}_{k}^{(A),n})$ and (given the symmetry of the matter) vice versa.\\
Applying Van Kampen theorem, we have that $\mathcal{F}_{k}^{(A),n}\cup\mathcal{F}_{k}^{(B),n}$ is simply connected. Moreover the intersection of any number of $\mathcal{F}_{k}^{(A),n}$'s is path connected and the same is true for the intersection of two unions of $\mathcal{F}_{k}^{(A),n}$'s since the intersection $\bigcap_{A\in {\binom{\{1,\ldots,k\} }{{n+1}}}}\mathcal{F}_{k}^{(A),n}$ is not empty.\\

From the last example in \ref{ex:2.6} with $i=n$ we have $\mathcal{F}_{k}^{n,n}=\bigcup_{A\in {\binom{\{1,\ldots,k\} }{{n+1}}}}\mathcal{F}_{k}^{(A),n}$, and when $k>n+1$, we can cover it with a finite number of simply connected open sets with path connected intersections, so it is simply connected by the following

\begin{lemma} Let $X$ be a topological space which has a finite open cover $U_1,\ldots,U_n$
such that each $U_i$ is simply connected, $U_i\cap U_j$ is connected for all $i,j=1,\ldots,n$ and
$\bigcap_{i=1}^n U_i\neq \emptyset$. Then $X$ is simply connected.
\end{lemma}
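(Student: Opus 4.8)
The plan is to prove the statement by induction on $n$, the number of open sets in the cover, using the Van Kampen theorem at each step. The base case $n=1$ is immediate since $U_1=X$ is simply connected by hypothesis, and the case $n=2$ is the standard Van Kampen theorem: $X=U_1\cup U_2$ with $U_1,U_2$ simply connected and $U_1\cap U_2$ connected (hence path-connected, since we may assume the ambient spaces here are locally path-connected, or argue directly that connected open subsets of the relevant manifolds are path-connected) forces $\pi_1(X)=\pi_1(U_1)\ast_{\pi_1(U_1\cap U_2)}\pi_1(U_2)$ to be trivial.

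For the inductive step, suppose the statement holds for covers of size $n-1$ and let $U_1,\ldots,U_n$ be a cover of $X$ satisfying the hypotheses. First I would set $V=U_1\cup\cdots\cup U_{n-1}$ and observe that I want to apply the two-set Van Kampen theorem to $X=V\cup U_n$. This requires three things: that $V$ is simply connected, that $U_n$ is simply connected (given), and that $V\cap U_n=(U_1\cap U_n)\cup\cdots\cup(U_{n-1}\cap U_n)$ is path-connected. To see $V$ is simply connected, apply the inductive hypothesis to the cover $U_1,\ldots,U_{n-1}$ of $V$: each $U_i$ is simply connected, each $U_i\cap U_j$ is connected, and $\bigcap_{i=1}^{n-1}U_i\supseteq\bigcap_{i=1}^n U_i\neq\emptyset$. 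To handle $V\cap U_n$, note that it is covered by the sets $W_i=U_i\cap U_n$ for $i=1,\ldots,n-1$; each $W_i$ is connected, each pairwise intersection $W_i\cap W_j=U_i\cap U_j\cap U_n$ is connected (here is where one needs the full strength of "all pairwise intersections connected" — actually one needs triple intersections connected, so I should be careful), and the total intersection $\bigcap_{i=1}^{n-1}W_i=\bigcap_{i=1}^n U_i$ is nonempty; a connectedness-only version of the nerve-type argument then gives that $V\cap U_n$ is connected.

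The step I expect to be the main obstacle is precisely verifying that $V\cap U_n$ is path-connected: the hypotheses only guarantee connectedness of pairwise intersections $U_i\cap U_j$, not of triple intersections $U_i\cap U_j\cap U_n$, so a naive induction does not close. I would address this either by strengthening what I carry through the induction (prove simultaneously that every union $U_{i_1}\cup\cdots\cup U_{i_m}$ is simply connected \emph{and} that it has the property that its intersection with any other $U_j$ is connected, using the nonemptiness of the global intersection to glue), or by invoking the specific geometry at hand: in the application the sets are $\mathcal{F}_k^{(A),n}$, whose arbitrary intersections are themselves of the form $\mathcal{F}_k^{\mathcal{A},n}$ and hence path-connected by Proposition \ref{l.5}, so in fact all multiple intersections are path-connected and the induction is trivial. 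For a clean self-contained lemma I would therefore either add the hypothesis that all multiple intersections are connected, or run the induction keeping track of the auxiliary statement. Once $V$ and $U_n$ are simply connected and $V\cap U_n$ is path-connected, Van Kampen gives $\pi_1(X)=\pi_1(V)\ast_{\pi_1(V\cap U_n)}\pi_1(U_n)=1\ast 1=1$, completing the induction.
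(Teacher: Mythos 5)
Your overall strategy --- induction on the number of sets, applying the two-set Van Kampen theorem to $V=U_1\cup\cdots\cup U_{n-1}$ and $U_n$ --- is exactly the paper's argument. However, the step you single out as the main obstacle, namely the connectedness of $V\cap U_n$, is not an obstacle, and your proposed remedies (adding the hypothesis that triple intersections are connected, strengthening the inductive statement, or falling back on the specific geometry of the sets $\mathcal{F}_k^{(A),n}$) are all unnecessary. Write $V\cap U_n=\bigcup_{i=1}^{n-1}(U_i\cap U_n)$. Each $U_i\cap U_n$ is connected by hypothesis, and every one of them contains the nonempty set $\bigcap_{j=1}^{n}U_j$. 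A union of connected sets all sharing a common point is connected; no information about the intersections $(U_i\cap U_n)\cap(U_j\cap U_n)=U_i\cap U_j\cap U_n$ is required. This one-line observation is precisely how the paper closes the induction, and it shows the lemma is provable exactly as stated --- your suggestion that a ``clean self-contained lemma'' would need stronger hypotheses is mistaken. (The separate point you raise, that Van Kampen requires path-connected rather than merely connected intersections, is legitimate but harmless in context: the sets in the application are open in a manifold, hence locally path-connected, so connected open sets are path-connected.)
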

\begin{proof}
By induction, let's suppose $\bigcup_{i=1}^{k-1} U_i$ is simply connected. Then, applying Van Kampen theorem to $U_k$ and $\bigcup_{i=1}^{k-1} U_i$, we get that $\bigcup_{i=1}^k U_i$ is simply connected if $U_k\cap(\bigcup_{i=1}^{k-1} U_i)$ is connected. But
$U_k\cap(\bigcup_{i=1}^{k-1} U_i)=\bigcup_{i=1}^{k-1}(U_k\cap U_i)$ is the union of connected sets with non empty intersection, and therefore is connected.
\end{proof}
Now, using the fibration in Proposition \ref{pr:1} with $n=i+1$, we obtain that 
$\mathcal{F}_{k}^{n-1,n}$ is simply connected when $k>n$.\\
Summing up the results for the oredered case, the following main theorem is proved

\begin{theorem} The spaces $\mathcal{F}_{k}^{i,n}$ are simply connected except
\begin{enumerate}
\item $\pi_1(\mathcal{F}_{k}^{1,1})=\mathcal{PB}_k$,
\item $\pi_1(\mathcal{F}_{k}^{1,n})=\big<\alpha_{ij}\,,\,1\leq i<j\leq k\big|(YB3)_k,(YB4)_k,\,D_{k}=1\big>$ when $n >1$,
\item $\pi_1(\mathcal{F}_{n+1}^{n,n})=\mathbb Z$ for all $n \geq 1$, with generator described in (\ref{eq:4}). 
\end{enumerate}
\end{theorem}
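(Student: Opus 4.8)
The statement merely assembles the results proved so far, so the plan is to run a case analysis on the triple $(i,n,k)$ (with $k>1$, $1\leq i\leq n$, $k\geq i+1$), reducing every instance to a case already treated. The main bookkeeping device is \eqref{eq:1}: for $n>i$ it gives $\pi_1(\mathcal{F}_{k}^{i,n})\cong\pi_1(\mathcal{F}_{k}^{i,i+1})$, so that only the two ``diagonal'' families $\mathcal{F}_{k}^{i,i}$ and $\mathcal{F}_{k}^{i,i+1}$ are ever analysed; the latter is controlled by the fibration $\gamma\colon\mathcal{F}_{k}^{i,i+1}\to Graff^{i}(\mathbb{C}^{i+1})$ of Proposition \ref{pr:1}, whose base is simply connected and whose fiber is $\mathcal{F}_{k}^{i,i}$, so $\pi_1(\mathcal{F}_{k}^{i,i+1})$ is a quotient of $\pi_1(\mathcal{F}_{k}^{i,i})$.

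First I would treat the line case $i=1$. If $n=1$, Remark \ref{rm:1} gives $\mathcal{F}_{k}^{1,1}=\mathcal{F}_{k}(\mathbb{C})$, whose $\pi_1$ is the classical pure braid group $\mathcal{PB}_k$; this is $(1)$. If $n>1$, \eqref{eq:1} reduces us to $\mathcal{F}_{k}^{1,2}$, and the homotopy exact sequence \eqref{eq:1bis} of $\gamma\colon\mathcal{F}_{k}^{1,2}\to Graff^{1}(\mathbb{C}^{2})$ identifies $\pi_1(\mathcal{F}_{k}^{1,2})$ with the quotient of $\mathcal{PB}_k$ by the image of the generator of $\pi_2(Graff^{1}(\mathbb{C}^{2}))\cong\mathbb{Z}$, which by Lemma \ref{l.6} is the pure braid $D_k$; this is $(2)$. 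I would then observe that the general-position family $i=k-1$ is exactly the content of the theorem of Section \ref{s:due}: $\mathcal{F}_{k}^{k-1,n}$ is simply connected unless $k=n+1$, in which case $\pi_1(\mathcal{F}_{n+1}^{n,n})=\mathbb{Z}$ with the generator \eqref{eq:4}; this is $(3)$.

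It remains to establish simple connectivity when $2\leq i\leq k-2$. If $i=n$ (so $k\geq n+2$), Example \ref{ex:2.6}$(5)$ gives the open cover $\mathcal{F}_{k}^{n,n}=\bigcup_{A\in {\binom{\{1,\ldots,k\} }{{n+1}}}}\mathcal{F}_{k}^{(A),n}$; each piece has $\pi_1\cong\mathbb{Z}$ by Lemma \ref{l.6.5}, each two-piece union $\mathcal{F}_{k}^{(A),n}\cup\mathcal{F}_{k}^{(B),n}$ is simply connected because the fiber generator of $\pi_1(\mathcal{F}_{k}^{(A,B),n})$ (Lemma \ref{l.6.5bis}) dies in $\mathcal{F}_{k}^{(A),n}$ through the factorization of $P_{A,B}$ via $P_A$, and every intersection occurring is path connected by Proposition \ref{l.5}; the covering lemma proved above then gives that $\mathcal{F}_{k}^{n,n}$ is simply connected. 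If instead $2\leq i<n$, \eqref{eq:1} reduces us to $\mathcal{F}_{k}^{i,i+1}$, whose $\pi_1$ is a quotient of $\pi_1(\mathcal{F}_{k}^{i,i})$; but $\mathcal{F}_{k}^{i,i}$ falls under the previous sentence (with $i\geq 2$ and $k\geq i+2$), hence is simply connected, and therefore so is $\mathcal{F}_{k}^{i,n}$. Collecting all cases --- whose overlaps are consistent, e.g. $D_2=\alpha_{12}$ so that $(2)$ yields $\pi_1(\mathcal{F}_{2}^{1,n})=1$ for $n>1$, in agreement with general position --- proves the theorem.

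I expect the only genuinely non-formal step to be the one already carried out in Section \ref{s:main}: verifying that the fiber generator of $\pi_1(\mathcal{F}_{k}^{(A,B),n})$ becomes trivial in $\pi_1(\mathcal{F}_{k}^{(A),n})$ --- equivalently, that the generator of $\pi_1(\mathcal{F}_{k}^{(B),n})$ is killed inside $\mathcal{F}_{k}^{(A),n}$, and symmetrically --- which is precisely what forces $\mathcal{F}_{k}^{(A),n}\cup\mathcal{F}_{k}^{(B),n}$ to be simply connected. Everything else is the combinatorial assembly described above.
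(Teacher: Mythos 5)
Your proposal is correct and follows essentially the same route as the paper: the line case via the exact sequence \eqref{eq:1bis} and Lemma \ref{l.6}, the general-position case from Section \ref{s:due}, the Van Kampen argument on the cover $\mathcal{F}_{k}^{n,n}=\bigcup_{A}\mathcal{F}_{k}^{(A),n}$ using Lemmas \ref{l.6.5}, \ref{l.6.5bis} and Proposition \ref{l.5}, and the final reduction of $\mathcal{F}_{k}^{i,n}$ to $\mathcal{F}_{k}^{i,i+1}$ and its fiber $\mathcal{F}_{k}^{i,i}$ via \eqref{eq:1} and Proposition \ref{pr:1}. The explicit case split on $(i,n,k)$ and the consistency check $D_2=\alpha_{12}$ are just a cleaner bookkeeping of what the paper does implicitly.
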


\section{The unordered case:  $\mathcal{C}_k^{i,n}$} 
\label{five}

Let $\mathcal{C}_k^{i,n}$ be the unordered configuration space of all $k$ distinct points $p_1,\ldots,p_k$ in $\mathbb C^n$ which generate an $i$-dimensional space.  Then $\mathcal{C}_k^{i,n}$ is obtained quotienting $\mathcal{F}_k^{i,n}$ by the action of the symmetric group $\Sigma_k$. The map $p:\mathcal{F}_k^{i,n}\to\mathcal{C}_k^{i,n}$ is a regular covering with $\Sigma_k$ as deck transformation group, so we have the exact sequence:
$$
1\to\pi_1(\mathcal{F}_k^{i,n})\mathop{\longrightarrow}\limits^{p_*}
\pi_1(\mathcal{C}_k^{i,n})\mathop{\longrightarrow}\limits^{\tau}\Sigma_k\to 1
$$
which gives immediately $\pi_1(\mathcal{C}_k^{i,n})=\Sigma_k$ in case $\mathcal{F}_k^{i,n}$ is simply connected.\\
Observe that the fibration in Proposition \ref{pr:1} may be quotiented obtaining a locally trivial fibration
$\mathcal{C}_k^{i,n}\to Graff^i(\mc^n)$ with fiber $\mathcal{C}_k^{i,i}$.\\
This gives an exact sequence of homotopy groups which, together with the one from Proposition \ref{pr:1} and those coming from regular coverings, gives the following commutative diagram for $i<n$:

\begin{center}
\begin{picture}(330,180)
\thicklines

 \put(215,170){$1$}
\put(124,170){$1$}
\put(127,165){\vector(0,-1){20}}
\put(215,165){\vector(0,-1){20}}
\put(127,75){\vector(0,-1){30}}
\put(120,35){$\Sigma_{k}$}
\put(210,35){$\Sigma_{k}$}
  \put(127,28){\vector(0,-1){20}}
  \put(217,28){\vector(0,-1){20}}
   \put(125,-3){$1$}

   \put(215,-3){$1$}
\put(127,125){\vector(0,-1){30}}
\put(65,125){\vector(0,-1){30}}
\put(219,75){\vector(0,-1){30}}
\put(220,125){\vector(0,-1){30}}
\put(160,130){\vector(1,0){30}}
\put(160,85){\vector(1,0){30}}
\put(250,85){\vector(1,0){30}}
\put(250,130){\vector(1,0){30}}
\put(77,130){\vector(1,0){30}}
\put(85,135){$\delta_*$}
\put(85,90){$\delta_{*}'$}
\put(77,85){\vector(1,0){30}}
 \put(25,130){\vector(1,0){30}}
 \put(25,85){\vector(1,0){30}}
  \put(8,130){$\ldots$}
  \put(8,85){$\ldots$}
\put(200,130){$\pi_1(\mathcal{F}_{k}^{i,n})$}
\put(200,80){$\pi_1(\mathcal{C}_{k}^{i,n}$)}
\put(286,125){$ 1$}
\put(286,81){$1$}
\put(115,130){$\pi_1(\mathcal{F}_{k}^{i,i}$)}
\put(115,80){$\pi_1(\mathcal{C}_{k}^{i,i}$)}
\put(63,127){$\mathbb{Z}$}
\put(63,82){$\mathbb{Z}$}
\put(70,110){$\cong$}
\put(155,37){\vector(1,0){45}}
\end{picture}
\end{center}

In case $i=1$, $\mathcal{F}_{k}^{1,1}=\mathcal{F}_k(\mc)$ and $\mathcal{C}_{k}^{1,1}=\mathcal{C}_k(\mc)$, so $\pi_1(\mathcal{F}_k^{1,1})=\mathcal{PB}_k$ and $\pi_1(\mathcal{C}_k^{1,1})=\mathcal{B}_k$, and since Im$\delta_*=<\, D_k\, >\subset\mathcal{PB}_k$,
the left square gives Im$\delta'_*=<\, \Delta^2_k\, >\subset\mathcal{B}_k$,
therefore $\pi_1(\mathcal{C}_k^{1,n})=\mathcal{B}_k/<\, \Delta^2_k\, >$.

For $i=n=k-1$, we have $\pi_1(\mathcal{F}_{n+1}^{n,n})=\mz$, and we can use the exact sequence of the regular covering $p:\mathcal{F}_{n+1}^{n,n}\to\mathcal{C}_{n+1}^{n,n}$ to get a presentation of $\pi_1(\mathcal{C}_{n+1}^{n,n})$.

Let's fix $Q=(0,e_1,\ldots,e_n)\in\mathcal{F}_{n+1}^{n,n}$ and $p(Q)\in\mathcal{C}_{n+1}^{n,n}$ 
as base points and for $i=1,\ldots n$ define $\gamma_i:[0,\pi]\to\mathcal{F}_{n+1}^{n,n}$ to be the (open) path 
$$
\gamma_i(t)=(\frac{1}{2}(e^{i(t+\pi)}+1)e_i,e_1,\ldots,e_{i-1},\frac{1}{2}(e^{it}+1))e_i,e_{i+1}\ldots,e_n)
$$
(which fixes all entries except the first and the $(i+1)$-th and exchanges 0 and $e_i$ by a half rotation in the line $<0,e_i>$).\\
Then $p\circ\gamma_i$ is a closed path in $\mathcal{C}_{n+1}^{n,n}$ and we denote it's homotopy class in $\pi_1(\mathcal{C}_{n+1}^{n,n})$ by $\sigma_i$. Hence $\tau_i=\tau(\sigma_i)$ is the deck transformation corresponding to the transposition $(0,i)$ (we take $\Sigma_{n+1}$ as acting on $\{0,1,\ldots,n\}$) and we get a set of generators for $\Sigma_{n+1}$ satisfying the following relations
$$
\tau_i^2=\tau_i\tau_j\tau_i\tau_j^{-1}\tau_i^{-1}\tau_j^{-1}=1\ {\rm for}\ i,j=1,\ldots,n \, , 
$$
$$
[\tau_1\tau_2\cdots\tau_{i-1}\tau_i\tau_{i-1}^{-1}\cdots\tau_1^{-1},
\tau_1\tau_2\cdots\tau_{j-1}\tau_j\tau_{j-1}^{-1}\cdots\tau_1^{-1}]=1\ {\rm for}\ |i-j|>2 \,.
$$
If we take $T$, the (closed) path in $\mathcal{F}_{n+1}^{n,n}$ in which all entries are fixed except for one which goes round the hyperplane generated by the others counterclockwise, as generator of $\pi_1(\mathcal{F}_{n+1}^{n,n})$, then $\pi_1(\mathcal{C}_{n+1}^{n,n})$ is generated by $T$ and the $\sigma_1,\ldots,\sigma_n$.\\
In order to get the relations, we must write the words $\sigma_i^2$, $\sigma_i\sigma_j\sigma_i\sigma_j^{-1}\sigma_i^{-1}\sigma_j^{-1}$ and $[\sigma_1\sigma_2\cdots\sigma_{i-1}\sigma_i\sigma_{i-1}^{-1}\cdots\sigma_1^{-1},
\sigma_1\sigma_2\cdots\sigma_{j-1}\sigma_j\sigma_{j-1}^{-1}\cdots\sigma_1^{-1}]$ as well as $\sigma_iT\sigma_i^{-1}$ as elements of Ker $\tau=$ Im $p_*$ for all appropriate $i,j.$\\
Observe that the path $\gamma'_i:[\pi,2\pi]\to\mathcal{F}_{n+1}^{n,n}$, 
defined by the same formula as $\gamma_i$, is a lifting of $\sigma_i$ with starting point
$(e_i,e_1,e_2,\ldots,e_{i-1},0,e_{i-1},\ldots,e_n)$ and that $\gamma_i\gamma'_i$ is a closed path in
$\mathcal{F}_{n+1}^{n,n}$ which is the generator $T$ of $\pi_1(\mathcal{F}_{n+1}^{n,n})$ (as you can see by the homotopy $(\frac{\epsilon}{2}(e^{i(t+\pi)}+1)e_i,e_1,\ldots,e_{i-1},\frac{2-\epsilon}{2}(e^{it}+\frac{\epsilon}{2-\epsilon}))e_i,e_{i+1}\ldots,e_n)$, $\epsilon\in[0,1]$, where for $\epsilon=0$ we have the point $e_i$ going round the hyperplane
$<0,e_1,e_2,\ldots,e_{i-1},e_{i+1},\ldots,e_n>$ counterclockwise).\\
Thus we have $p_*(T)=\sigma_i^2$ for all $i=1,\ldots,n$ (and that Im$p_*$ is the center of $\pi_1(\mathcal{C}_{n+1}^{n,n})$).\\
Moreover, it's easy to see, by lifting to $\mathcal{F}_{n+1}^{n,n}$, that the $\sigma_i$ satisfy the relations
$$
\sigma_i\sigma_j\sigma_i\sigma_j^{-1}\sigma_i^{-1}\sigma_j^{-1}=1\ {\rm for}\ i,j=1,\ldots,n
$$
and
$$
[\sigma_1\sigma_2\cdots\sigma_{i-1}\sigma_i\sigma_{i-1}^{-1}\cdots\sigma_1^{-1},
\sigma_1\sigma_2\cdots\sigma_{j-1}\sigma_j\sigma_{j-1}^{-1}\cdots\sigma_1^{-1}]=1\ {\rm for}\ |i-j|>2 \,\, .
$$
We can represent a lifting of $\sigma'_i=\sigma_1\sigma_2\cdots\sigma_{i-1}\sigma_i\sigma_{i-1}^{-1}\cdots\sigma_1^{-1}$ (which gives the deck transformation corresponding to the transposition $(i,i+1)$) by a path which fixes all entries except the $i$-th and the $(i+1)$-th and exchanges $e_i$ and $e_{i+1}$ by a half rotation in the line $<e_i,e_{i+1}>$.\\
We can now change the set of generators by first deleting $T$ and introducing the relations
$$
\sigma_1^2=\sigma_2^2=\cdots=\sigma_n^2
$$ 
and then by choosing the $\sigma'_i$'s instead of the $\sigma_i$'s. Then we get that the generators  $\sigma'_i$'s satisfy the relations
$$
\sigma'_i\sigma'_{i+1}\sigma'_i=\sigma'_{i+1}\sigma'_i\sigma'_{i+1}\ {\rm for}\ i=1,\ldots,n-1,
$$
$$
[\sigma'_i,\sigma'_j]=1\ {\rm for}\ |i-j|>2
$$
and
\begin{equation}\label{eq:2}
{\sigma'_1}^2={\sigma'_2}^2=\cdots={\sigma'_n}^2.
\end{equation}
Namely, $\pi_1(\mathcal{C}_{n+1}^{n,n})$ is the quotient of the braid group $\mathcal{B}_{n+1}$ on $n+1$ strings by relations (\ref{eq:2}) and the following main theorem is proved.

\begin{theorem}
The fundamental groups $\pi_1(\mathcal{C}_{k}^{i,n})$ are isomorphic to the symmetric group $\Sigma_k$  except
\begin{enumerate}
\item $\pi_1(\mathcal{C}_{k}^{1,1})=\mathcal{B}_k$,
\item $\pi_1(\mathcal{C}_k^{1,n})=\mathcal{B}_k/<\, \Delta^2_k\, >$ when $n>1$,
\item $\pi_1(\mathcal{C}_{n+1}^{n,n})=\mathcal{B}_{n+1}/<{\sigma_1}^2={\sigma_2}^2=\cdots={\sigma_n}^2>$ for all $n \geq 1$.
\end{enumerate}
\end{theorem}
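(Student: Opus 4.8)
The plan is to combine the three already-established facts: $\mathcal{F}_k^{i,n}$ is simply connected except in the listed cases (previous theorem), the covering exact sequence $1\to\pi_1(\mathcal{F}_k^{i,n})\xrightarrow{p_*}\pi_1(\mathcal{C}_k^{i,n})\xrightarrow{\tau}\Sigma_k\to1$, and the explicit computations already carried out for the two exceptional families. So the generic case is immediate: whenever $\mathcal{F}_k^{i,n}$ is simply connected, the exact sequence collapses to $\pi_1(\mathcal{C}_k^{i,n})\cong\Sigma_k$, which handles all $(i,n,k)$ outside the two exceptional families.

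For the case $i=1$ I would invoke the commutative ladder of exact sequences displayed in Section~\ref{five}, relating the fibration $\mathcal{F}_k^{1,n}\to Graff^1(\mc^n)$ with fiber $\mathcal{F}_k(\mc)$ to its quotient $\mathcal{C}_k^{1,n}\to Graff^1(\mc^n)$ with fiber $\mathcal{C}_k(\mc)$. Since $\mathrm{Im}\,\delta_*=\langle D_k\rangle\subset\mathcal{PB}_k$ by Lemma~\ref{l.6}, and under the standard inclusion $\mathcal{PB}_k\hookrightarrow\mathcal{B}_k$ the element $D_k$ is identified with $\Delta_k^2$ (recalled in the introduction), a diagram chase in the left-hand square forces $\mathrm{Im}\,\delta'_*=\langle\Delta_k^2\rangle\subset\mathcal{B}_k$; hence $\pi_1(\mathcal{C}_k^{1,n})\cong\mathcal{B}_k/\langle\Delta_k^2\rangle$ for $n>1$, and $\pi_1(\mathcal{C}_k^{1,1})=\mathcal{B}_k$ trivially. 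One should check that $\delta'_*$ really has the same image as the composite coming from $\delta_*$ and $p_*$; this is where the naturality of the connecting homomorphism under the covering quotient is used, and it is the only genuinely delicate point in this case.

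For the case $i=n=k-1$, where $\pi_1(\mathcal{F}_{n+1}^{n,n})=\mz$ is generated by the loop $T$ going once around a hyperplane, the strategy is to present $\pi_1(\mathcal{C}_{n+1}^{n,n})$ via the exact sequence of the $\Sigma_{n+1}$-covering $\mathcal{F}_{n+1}^{n,n}\to\mathcal{C}_{n+1}^{n,n}$. Concretely: lift the transpositions $(0,i)$ to the explicit half-rotation paths $\gamma_i$, call their images $\sigma_i\in\pi_1(\mathcal{C}_{n+1}^{n,n})$, so that $T,\sigma_1,\dots,\sigma_n$ generate. Then identify the relations by lifting: (a) $\gamma_i\gamma'_i$ is homotopic to $T$, giving $p_*(T)=\sigma_i^2$ for every $i$, whence $\sigma_1^2=\cdots=\sigma_n^2$ and $\mathrm{Im}\,p_*$ is central; (b) the braid-type relations $\sigma_i\sigma_j\sigma_i\sigma_j^{-1}\sigma_i^{-1}\sigma_j^{-1}=1$ and the far-commutation relations, verified by explicit homotopies of lifts; (c) finally, pass to the alternative generators $\sigma'_i=\sigma_1\cdots\sigma_{i-1}\sigma_i\sigma_{i-1}^{-1}\cdots\sigma_1^{-1}$, which lift to half-rotations in $\langle e_i,e_{i+1}\rangle$ and therefore satisfy the Artin relations $\sigma'_i\sigma'_{i+1}\sigma'_i=\sigma'_{i+1}\sigma'_i\sigma'_{i+1}$ together with $(\sigma'_1)^2=\cdots=(\sigma'_n)^2$.

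The hard part is establishing that these relations are \emph{complete}, i.e. that no further relations are needed. For this I would argue that the candidate group $G=\mathcal{B}_{n+1}/\langle\sigma_1^2=\cdots=\sigma_n^2\rangle$ surjects onto $\pi_1(\mathcal{C}_{n+1}^{n,n})$ (all its defining relations hold there) and simultaneously sits in its own short exact sequence $1\to\langle z\rangle\to G\to\Sigma_{n+1}\to1$, where $z$ is the common value of the $\sigma_i^2$ and $\langle z\rangle\cong\mz$ is central — this last point uses that in $\mathcal{B}_{n+1}$ the element $\sigma_1^2$ generates a subgroup mapping onto the full cyclic kernel and that quotienting preserves infinite order of $z$, which can be seen for instance from the abelianization or from the known structure of $\mathcal{B}_{n+1}$. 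Comparing the two exact sequences via the five lemma then forces the surjection $G\to\pi_1(\mathcal{C}_{n+1}^{n,n})$ to be an isomorphism, completing the proof. Verifying that $z$ has infinite order in $G$ (equivalently, that the central extension does not degenerate) is the step I expect to require the most care.
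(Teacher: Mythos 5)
Your proposal follows the paper's own route in all three parts: the covering exact sequence for the generic case, the commutative ladder identifying $\mathrm{Im}\,\delta'_*=\langle\Delta_k^2\rangle$ from $\mathrm{Im}\,\delta_*=\langle D_k\rangle$ for $i=1$, and the explicit lifted half-rotation generators with $p_*(T)=\sigma_i^2$ for $i=n=k-1$. The only difference is that you make the completeness of the relations explicit via a five-lemma comparison of the extension $1\to\langle z\rangle\to G\to\Sigma_{n+1}\to1$ with the covering sequence (checking $z$ has infinite order via the abelianization), whereas the paper leaves this step implicit in the standard presentation-of-extensions argument; this is a sound and slightly more careful finish, not a different method.
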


\medskip


\begin{thebibliography}{B1}
\bibitem[A]{A} Artin, E. (1947), \emph{Theory of braids}, Ann. of Math. (2)%
\textbf{48},  pp. 101-126.

\bibitem[BS]{BS} Berceanu, B. and Parveen, S. (2012), \emph{Braid groups in complex projective spaces}, Adv. Geom. \textbf{12}, p.p. 269 - 286. 



\bibitem[B2]{B2} Birman, Joan S. (1974), \emph{Braids, Links, and Mapping
Class Groups}, Annals of Mathematics  vol. \textbf{82}, Princeton University
Press.

\bibitem[F]{F} Fadell, E.R, Husseini, S.Y. (2001), \emph{Geometry and
Topology of Configuration Spaces}, Springer Monographs in Mathematics,
Springer-Verlarg Berlin.

\bibitem[G]{G} Garside, F.A. (1969), \emph{The braid groups and other groups}%
, Quat. J. of Math. Oxford, $2^{e}$ ser. \textbf{20}, 235-254.

\bibitem[H]{H} Hatcher, A. (2002), \emph{Algebraic Topology}, Cambridge
University Press.

\bibitem[M1]{M1} Moran, S. (1983), \emph{The Mathematical Theory of Knots
and Braids}, North Holland Mathematics Studies, Vol 82 (Elsevier, Amsterdam).

\bibitem[M2]{M2} Moulton, V. L. (1998), \emph{Vector Braids}, J. Pure Appl.
Algebra, \textbf{131}, no. 3, 245-296.

\end{thebibliography}
\end{document}